\DeclareMathAlphabet{\mathpzc}{OT1}{pzc}{m}{it}
\numberwithin{equation}{section} \allowdisplaybreaks
\newcommand{\vertii}[1]{{\left\vert\left\vert #1
    \right\vert\right\vert}}    
\newcommand{\verti}[1]{{\left\vert #1
    \right\vert}}
\newcommand{\pgnet}{\text{PG-VarMiON}\xspace} 
\newcommand{\philcom}[1]{\textcolor{olive}{(Phil: #1)}} 
\begin{document}

\title{An optimal Petrov-Galerkin framework for operator networks}
\author[1]{Philip Charles \thanks{Corresponding author}}
\author[1]{Deep Ray}
\author[2]{Yue Yu}
\author[3]{Joost Prins}
\author[3]{Hugo Melchers}
\author[3]{Michael R. A. Abdelmalik}
\author[4]{Jeffrey Cochran}
\author[5]{Assad A. Oberai}
\author[4]{Thomas J. R. Hughes}
\author[6]{Mats G. Larson}
\affil[1]{Department of Mathematics, University of Maryland}
\affil[2]{Department of Mathematics, Lehigh University}
\affil[3]{Department of Mechanical Engineering, Eindhoven University of Technology}
\affil[4]{Oden Institute for Computational Engineering and Sciences, University of Texas at Austin}
\affil[5]{Department of  Aerospace and Mechanical Engineering, University of Southern California}
\affil[6]{Department of Mathematics, Ume\r{a} University}

\renewcommand{\thefootnote}{}
\footnotetext{\textit{E-mail addresses:} charlesp@umd.edu (P. Charles); deepray@umd.edu (D. Ray), yuy214@lehigh.edu (Y. Yu); j.h.m.prins@tue.nl (J. Prins); h.a.melchers@tue.nl (H. Melchers); m.abdel.malik@tue.nl (M.R.A. Abdelmalik); jeffrey.david.cochran@gmail.com (J. Cochran); aoberai@usc.edu (A.A. Oberai); hughes@oden.utexas.edu (T.J.R. Hughes); mats.larson@umu.se (M. Larson)}
\renewcommand{\thefootnote}{\arabic{footnote}} 

\date{}

\maketitle

\begin{abstract}
    The optimal Petrov-Galerkin formulation to solve partial differential equations (PDEs) recovers the best approximation in a specified finite-dimensional (trial) space with respect to a suitable norm. However, the recovery of this optimal solution is contingent on being able to construct the optimal weighting functions associated with the trial basis. While explicit constructions are available for simple one- and two-dimensional problems, such constructions for a general multidimensional problem remain elusive. In the present work, we revisit the optimal Petrov-Galerkin formulation through the lens of deep learning. We propose an operator network framework called Petrov-Galerkin Variationally Mimetic Operator Network (PG-VarMiON), which emulates the optimal Petrov-Galerkin weak form of the underlying PDE. The PG-VarMiON is trained in a supervised manner using a labeled dataset comprising the PDE data and the corresponding PDE solution, with the training loss depending on the choice of the optimal norm. The special architecture of the PG-VarMiON allows it to implicitly learn the optimal weighting functions, thus endowing the proposed operator network with the ability to generalize well beyond the training set. We derive approximation error estimates for PG-VarMiON, highlighting the contributions of various error sources, particularly the error in learning the true weighting functions. Several numerical results are presented for the advection-diffusion equation to demonstrate the efficacy of the proposed method. By embedding the Petrov-Galerkin structure into the network architecture, PG-VarMiON exhibits greater robustness and improved generalization compared to other popular deep operator frameworks, particularly when the training data is limited.
\end{abstract}

\section{Introduction}

Deep learning-based frameworks for learning operators, which are mappings between function spaces, have witnessed an exponential growth in popularity over the past few years. This is particularly true for learning the solution operator for a partial differential equation (PDE) which maps the PDE data (boundary conditions, model parameters, problem domain, etc) to the associated solution. Once trained, the operator network can serve as a differentiable and computationally efficient surrogate model in science and engineering applications that require repeated evaluation of the PDE solution as the PDE data is varied. Some examples include uncertainty quantification using Monte-Carlo algorithms \cite{mishramcmc,lye2020}, PDE-constrained optimization 
 and control \cite{borzi,troltzsch2010optimal,lyeopt}. 

Operator learning with neural networks was first proposed by Chen and Chen \cite{chen1995universal,chen95rbf} accompanied by a \textit{universal approximation theorem} stating that a shallow network (with only three specialized layers) is capable of approximating any nonlinear continuous operator between two spaces of continuous functions. DeepONets \cite{lu2021learning} adapt the framework in \cite{chen1995universal} to deep neural networks \cite{lu2021learning},  with rigorous error and generalization estimates available in \cite{lanthaler2022error}, especially when DeepONets are used to solve a particular class of PDEs. Traditionally, DeepONets are constructed under the assumption that the operator in question can be approximated by a linear combination of basis functions, with the basis and linear coefficients represented by trainable networks. Since its inception, there have been several improvements and extensions to the DeepONet \cite{wang2021learning,goswami2022physics,garg2023,koric2023data,yang2022uq,xu2023transfer,goswami2022transfer,prasthofer2022variableinputdeepoperatornetworks,Jin2022,HOWARD2023112462,kontolati2024}.

Neural Operators \cite{li2020neural,kovachki2024} form an alternate framework for deep operator learning, which is based on the philosophy of first formulating the algorithm in the infinite dimensional setting followed by an appropriate discretization. Similar to feed-forward neural networks, neural operators typically comprise multiple layers, where each layer performs a linear non-local transformation on functions followed by point-wise nonlinear activations. The type of Neural Operator is characterized by how the non-local operation is implemented \cite{li2020fourier,li2020multipole,TRIPURA2023,cao2023,pino,you2022nonlocal,you2022learning,yu2024nonlocal}. Error estimates and analysis of the network complexity for certain types of Neural Operators when used to solve PDEs can be found in \cite{kovachki2021universal}. 

There has also been an interest in solving PDEs using neural networks by utilizing the underlying weak/variational form. A variant of a physics informed neural network (PINN) called wPINN \cite{deryck2022} was proposed to solve hyperbolic conservation laws, where the PDE residual loss is based on the variational form of the Kruzkhov entropy conditions. The Deep Ritz Method \cite{yu2018deep} constructs a variational energy loss functional and trains a neural network to learn the trial function so as to minimize this loss. In \cite{opschoor2024first}, a deep learning framework was introduced for PDEs by first casting them as first-order systems and then minimizing a least-squares residual of the system which is equivalent to the weak PDE residual. It was shown that the residual serves as a quasi-optimal error estimator, thus yielding an adaptive strategy to grow the neural networks akin to adaptive refinement in traditional FEM.  We note that the above listed approaches do not learn the PDE solution operator but only solve for an instance of the PDE. In the context of operator learning, a surrogate model for fracture analysis was proposed consisting of a DeepONet that incorporates a variational energy formulation into the loss function \cite{goswami2022physics}. The VarMiON formulation \cite{varmion} was proposed recently, where the operator network was constructed to mimic the discrete variational form of the PDE, and tested on linear PDEs with multiple input functions and the non-linear regularized Eikonal equation. Neural Green's Operators (NGOs) were introduced in \cite{melchers2024neural} to extend the variationally mimetic approach outlined in \cite{varmion} to enable the learning of Green's operators for parametric PDEs. Beyond serving as a surrogate for the solution operator of a PDE, NGOs also provide an explicit representation of the inferred Green's function, which can be leveraged in numerical solvers for PDEs—for example, by constructing effective matrix preconditioners.


In the optimal Petrov-Galerkin framework for PDEs the goal is to consider the infinite dimensional weak solution $u \in \dbV$ of a PDE, and recover its best approximation $\bar{u}$ on a given finite-dimensional functional space $\bar{\dbV} \subset \dbV$ as measured in a desired norm $\|.\|_\dagger$. One can show that $\bar{u}$ is equivalently the solution to a Petrov-Galerkin formulation of the discrete problem where the test space $\dbV^\dagger \subset \dbV$  is spanned by a set of optimal weighting function. We remark  that unlike a standard Galerkin formulation, the test space $\dbV^\dagger$ will typically be different than the trial space $\bar{\dbV}$. Once the weighting functions are determined, the discrete problem simplifies to a symmetric, positive-definite weak formulation determined by the norm $\|.\|_\dagger$. The underlying theory of optimal Petrov-Galerkin methods is elegant and allows for the recovery of optimal convergence rates \cite{LOULA1987}, even for cases where standard Galerkin methods fail. 

The notion of optimal weighting functions in variational methods for PDEs was first formalized for the advection-diffusion equation by Barret and Morton \cite{BARRETT1984}, although an earlier instantiation of it occurred in the thesis of Hemker \cite{hemker}. The optimal Petrov-Galerkin framework was later applied in the development of an adaptive characteristic fraction-step method \cite{DEMKOWICZ1986}, and also in the formulation for the Timoshenko beam problem that was insensitive to the ratio of thickness to length \cite{LOULA1987}. It was noted in \cite{LOULA1987} when this ratio becomes very small, the trial/test space of standard Galerkin finite element formulations reduces to the space containing only the zero function (this is known as the \textit{locking} pathology). This behavior is avoided when using a Petrov-Galerkin formulation. Further, the authors in \cite{LOULA1987} concluded that extending this formulation to the two-dimensional analog of the Timoshenko beam, namely the Reissner-Mindlin plate \cite{hughes77}, would be a very difficult proposition. A similar conclusion may also be made for the advection-diffusion problem in higher dimensions. Although explicit constructions of optimal weighting functions have been carried out for simple problems in one-dimensions \cite{BARRETT1984,LOULA1987,DEMKOWICZ1986b} and two-dimensions \cite{BARBONE2001}, such constructions for a general multi-dimensional problem remained elusive. Thus, interest in this approach diminished as alternative finite element strategies such as Galerkin least squares \cite{HUGHES1989GLSQ}, residual-free bubbles \cite{FRANCA1997361} and variational multiscale \cite{HUGHES1995387} gained attention.  

In the present paper, we revive the idea of optimal weighting functions by formulating a suitable deep operator learning framework. In particular, we build an operator network that emulates the optimal Petrov-Galerkin formulation of the PDE. This Petrov-Galerkin VarMiON (PG-VarMiON) is trained to minimize the prediction error measured in the optimal norm $\|.\|_\dagger$, while implicitly learning the corresponding optimal weighting functions. This provides a systematic methodology for determining optimal weighting functions in situations that were impossible within the classical context. We provide estimates for the generalization error with the PG-VarMiON, and numerical results for the advection-diffusion problem demonstrating superior performance on out-of-distribution data as compared to existing popular operator learning frameworks. 

The remainder of the paper is structured as follows. In Section \ref{sec:problem_form} we describe the weak formulation for a general linear elliptic PDE and the optimal Petrov-Galerkin framework. In Section \ref{sec:PGVARMION}, we introduce the PG-VarMiON emulating the optimal Petrov-Galerkin formulation, describe the training procedure, and present an analysis of the generalization error. Numerical results are presented in Section \ref{sec:numerics} for the diffusion and advection-diffusion equations in one dimension, and the advection-diffusion problem in two dimensions. We end with concluding remarks in Section \ref{sec:conclusion}.

\section{Problem Formulation}\label{sec:problem_form}
Let $\Omega \in \Ro^d$ be an open, bounded domain with piecewise smooth boundary $\Gamma$. The boundary is further split into the Dirichlet boundary $\Gamma_D$ and natural boundary $\Gamma_\eta$, with $\Gamma = \Gamma_D \cup \Gamma_\eta$. Define the space $H^r_D(\Omega) = \{u \in H^r(\Omega) \ : \ u\big|_{\Gamma_D} = 0\}$. We consider the following scalar elliptic boundary value problem
\begin{equation}\label{eqn:gen_pde}
\begin{aligned}
\mathcal{L}(u(\x);\g(\x)) &=  f(\x) \quad &&\forall \ \x \in  \Omega, \\
\mathcal{B}(u(\x);\g(\x)) &= \eta (\x) \quad &&\forall \ \x \in  \Gamma_\eta,\\
u(\x) &= 0 \quad &&\forall \ \x  \in  \Gamma_D,
\end{aligned}
\end{equation}
where $\mathcal{L}$ is a linear elliptic PDE operator and $\mathcal{B}$ is the natural  boundary operator, both parametrized by a set of functions $\g \in \dbG$. Also, $f \in \dbF \subseteq L^2(\Omega)$ is the source term, and $\eta \in \dbN \subseteq L^2(\Gamma_\eta)$. The solution $u \in \dbV :=H^r_D(\Omega)$, where $r$ depends on the order of the operator $\mathcal{L}$.  

A particular example of \eqref{eqn:gen_pde} is the steady advection-diffusion equation with
\begin{equation}\label{eqn:pde}
\begin{aligned}
- \nabla \cdot ( \kappa(\x) \nabla u (\x)) +   \bm{c}(\x) \cdot \nabla u(\x) &=  f(\x) \quad &&\forall \ \x \in  \Omega, \quad \\
\kappa(\x) \nabla u(\x) \cdot \bm{n} &= \eta(\x) \quad &&\forall \ \x  \in  \Gamma_\eta,\\
u(\x) &= 0 \quad &&\forall \ \x  \in  \Gamma_D,
\end{aligned} 
\end{equation}
where $\dbV = H^1_D(\Omega)$, $\bm{n}$ is the unit outward normal on $\Gamma_\eta$ and the set of parametrizing functions are $\g = [\kappa, \bm{c}]$. Here $\kappa \in L^\infty(\Omega) \cup \{\kappa \ | \ \kappa(\x) \geq \kappa_\text{min} \ a.e. \ \x \in\Omega\}$ for some (fixed) scalar $\kappa_\text{min} > 0$ is the diffusion coefficient, while  $\bm{c} \in H^1_{\text{div}}(\Omega) = \{\bm{c} \in [L^2(\Omega)]^2 \ | \ \nabla \cdot \bm{c} \in L^2(\Omega)\}$ is the velocity field. We will use \eqref{eqn:pde} as a canonical example for the numerical results in Section \ref{sec:numerics}.

\subsection{Variational form and symmetrization}
The variational formulation of \eqref{eqn:gen_pde} is given by: find $u \in \dbV$ such that
\begin{equation}\label{eqn:weak}
a(u,w;\g) = (f,w) + (\eta,w)_{\Gamma_\eta} \qquad \forall \ w \in \dbV,
\end{equation}
where $(.,.)$ is the $L^2(\Omega)$ inner-product, $(.,.)_{\Gamma_\eta}$ is the $L^2(\Gamma_\eta)$ inner-product, while $a(u,w;\g)$ is the associated bilinear form parameterized by $\g$. We also assume that the bilinear form is coercive, which requires additional conditions on $\g$. With this assumption, a unique solution of \eqref{eqn:weak} exists, as guaranteed by the Lax-Milgram theorem \cite{brezis2011functional,brenner2002mathematical}. 

For the particular case of the advection-diffusion equation, we have
\begin{equation}\label{eqn:bilinear_form}
a(u,w;\kappa,\bm{c}) := (\kappa \nabla u, \nabla w) + \big(\bm{c} \cdot \nabla  u, w\big)
\end{equation}
where coercivity is guaranteed by assuming $\nabla \cdot \bm{c} = 0$, or alternately by assuming the bound $\|\bm{c}\|_{L^\infty} \leq C_\Omega \kappa_\text{min}$ where $C_\Omega$ is the constant (depending on $\Omega$) arising from the Poincar\'{e} inequality \cite{acosta2004optimal}. 

Let us define an inner-product $(.,.)_\dagger$ on $\dbV \times \dbV$ and the corresponding induced norm $\|.\|_\dagger$ on $\dbV$. The choice of the inner-product can be different from the usual norm $\dbV$ is equipped with, and often depends on the underlying PDE. For example, one could choose $\|.\|_\dagger$ to be the $L^2$ norm or the $H^1$ norm, where the latter would also provide estimates of the gradients of the solution. Note that the bilinear form in $a(u,w;\g)$ is not necessarily symmetric. However, we can symmetrize it using $(.,.)_\dagger$ \cite{BARRETT1984}. More specifically, by the Riesz representation theorem, there exists a  mapping $\mathcal{R}:\dbV \rightarrow \dbV$ such
\begin{equation}\label{eqn:riesz}
    a(u,w;\g) = (u,\mathcal{R}(w))_\dagger \quad \forall u,w \in \dbV,
\end{equation}
where the $\mathcal{R}$ 
will depend on $\g$. Thus, if $u$ is the solution of the weak form \eqref{eqn:weak}, then combining with \eqref{eqn:riesz} leads to the alternate weak formulation
\begin{equation}\label{eqn:weak2}
(u,\mathcal{R}(w))_\dagger = (f,w) +(\eta,w)_{\Gamma_\eta} \qquad \forall \ w \in \dbV,
\end{equation}
which does not explicitly require knowledge of the underlying PDE operator. 

\subsection{Optimal Petrov-Galerkin formulation}
Consider the finite-dimensional space $\overline{\dbV} \subset \dbV$ spanned by a \textit{trial basis} $\{\phi_i(\x)\}_{i=1}^N$. We are interested in the best finite-dimensional approximation $\bar{u} \in \overline{\dbV}$ of the true solution $u$ of \eqref{eqn:weak} as measured in the norm $\|.\|_\dagger$ on $\dbV$. More precisely, we solve the following problem: Find $\bar{u} \in \overline{\dbV}$ such that
\begin{equation}\label{eqn:opt_soln1}
    \bar{u} = \argmin{\bar{w} \in \overline{\dbV}} \|u-\bar{w}\|^2_\dagger.
\end{equation}
The optimality condition (by setting first variations to zero) corresponding to \eqref{eqn:opt_soln1} leads to
\begin{equation}\label{eqn:opt_cond1}
    (u-\bar{u}, \bar{w})_\dagger = 0 \qquad \forall \ \bar{w} \in \overline{\dbV}
\end{equation}
which implies that the projection error $e := u-\bar{u}$ is orthogonal to $\overline{\dbV}$. 

Next, we make the following assumption about the basis functions $\phi_i$ and the Riesz representer $\mathcal{R}$ appearing in \eqref{eqn:riesz} associated with the norm $\|.\|_\dagger$
\begin{assumption}\label{as:phi_psi}
    Given the norm $\|.\|_\dagger$ associated with the inner product $(.,.)_\dagger$ on $\dbV$ and the basis $\{\phi_i(\x)\}_{i=1}^N$, there exists functions $\psi_i \in \dbV$ such that $\phi_i(\x) = \mathcal{R}(\psi_i(\x))$ for $1\leq i\leq N$.
\end{assumption}
We refer to $\{\psi_i(\x)\}_{i=1}^N \subset \dbV$ as the \textit{weighting functions} and denote the space spanned by them as $\dbV^\dagger \subset \dbV$. 

We can now introduce the Petrov-Galerkin formulation for \eqref{eqn:gen_pde}: Find $\bar{u} \in \overline{\dbV}$ such that
\begin{equation}\label{eqn:pgform1}
    a(\bar{u},w;\g) = (f,w) + (\eta,w)_{\Gamma_\eta} \qquad \forall \ w \in \dbV^\dagger.
\end{equation}
The solution to \eqref{eqn:pgform1} is precisely the optimal solution in \eqref{eqn:opt_soln1} as characterized by the optimality condition \eqref{eqn:opt_cond1}. This is outlined in the following result.

\begin{lemma}[Optimality of $\overline{u}$]
Let $u$ be the solution of the infinite-dimensional variational problem \eqref{eqn:weak}. Let the relationship between $\psi_i$ and $\phi_i$ as described in Assumption \ref{as:phi_psi} hold. Then the solution $\overline{u}$ to \eqref{eqn:pgform1} is optimal in $\overline{\dbV}$ in the sense that it satisfies the optimality condition \eqref{eqn:opt_cond1}.
\end{lemma}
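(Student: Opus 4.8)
The plan is to show that the Petrov--Galerkin solution $\overline{u}$ of \eqref{eqn:pgform1} satisfies the orthogonality condition \eqref{eqn:opt_cond1}, which by the strict convexity of $\bar{w} \mapsto \|u - \bar{w}\|_\dagger^2$ on $\overline{\dbV}$ is exactly the statement that $\overline{u}$ solves \eqref{eqn:opt_soln1}. The essential idea is that Assumption \ref{as:phi_psi} lets us translate statements quantified over the test space $\dbV^\dagger$ into statements quantified over the trial space $\overline{\dbV}$ by passing through the Riesz map $\mathcal{R}$.

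First I would take an arbitrary $\bar{w} \in \overline{\dbV}$ and expand it in the trial basis as $\bar{w} = \sum_{i=1}^N \alpha_i \phi_i$. By Assumption \ref{as:phi_psi}, $\phi_i = \mathcal{R}(\psi_i)$ for each $i$, so $\bar{w} = \mathcal{R}\big(\sum_i \alpha_i \psi_i\big)$ (using linearity of $\mathcal{R}$, which follows since $a(u,\cdot\,;\g)$ and $(\cdot,\cdot)_\dagger$ are both linear in the relevant slot); write $w := \sum_i \alpha_i \psi_i \in \dbV^\dagger$. Next I would invoke the defining identity \eqref{eqn:riesz} with the pair $(e, w)$ where $e = u - \overline{u}$: this gives $(e, \mathcal{R}(w))_\dagger = a(e, w;\g)$, i.e. $(u - \overline{u}, \bar{w})_\dagger = a(u - \overline{u}, w;\g)$. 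Then I would use bilinearity of $a$ to split the right-hand side as $a(u,w;\g) - a(\overline{u},w;\g)$. Since $w \in \dbV^\dagger$, the infinite-dimensional weak form \eqref{eqn:weak} gives $a(u,w;\g) = (f,w) + (\eta,w)_{\Gamma_\eta}$, and the Petrov--Galerkin equation \eqref{eqn:pgform1} gives $a(\overline{u},w;\g) = (f,w) + (\eta,w)_{\Gamma_\eta}$; subtracting, the two right-hand sides cancel, so $a(u - \overline{u}, w;\g) = 0$. Tracing back, $(u - \overline{u}, \bar{w})_\dagger = 0$, and since $\bar{w} \in \overline{\dbV}$ was arbitrary this is precisely \eqref{eqn:opt_cond1}.

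I do not anticipate a serious obstacle here; the argument is a short diagram chase once the correspondence $\bar{w} \leftrightarrow w$ via $\mathcal{R}$ is set up. The one point deserving care is the linearity of $\mathcal{R}$ and the fact that $\{\psi_i\}$ spanning $\dbV^\dagger$ means every element of $\dbV^\dagger$ arises as $\sum_i \alpha_i \psi_i$, so that the map $\bar{w} \mapsto w$ is onto $\dbV^\dagger$; this surjectivity is what guarantees that testing against all of $\dbV^\dagger$ in \eqref{eqn:pgform1} is equivalent to testing against all of $\overline{\dbV}$ in \eqref{eqn:opt_cond1}. One should also note that \eqref{eqn:pgform1} implicitly presupposes $\dim \dbV^\dagger = \dim \overline{\dbV} = N$ and that the resulting square system is nonsingular (equivalently, that $\mathcal{R}$ restricted appropriately is injective on $\dbV^\dagger$), so that $\overline{u}$ exists and is unique; if desired I would remark that uniqueness also follows a posteriori from strict convexity of the minimization problem \eqref{eqn:opt_soln1}. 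Finally, to upgrade ``satisfies \eqref{eqn:opt_cond1}'' to ``is the minimizer,'' I would observe that \eqref{eqn:opt_cond1} is not merely the first-order stationarity condition but, because the functional is quadratic with positive-definite Hessian $(\cdot,\cdot)_\dagger$, a sufficient condition for the global minimum.
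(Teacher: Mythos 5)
Your argument is correct and is essentially the same as the paper's: both proofs use the Riesz identity \eqref{eqn:riesz} together with Assumption \ref{as:phi_psi} to convert $a(u-\overline{u},\,\cdot\,;\g)$ tested against $\dbV^\dagger$ into $(u-\overline{u},\,\cdot\,)_\dagger$ tested against $\overline{\dbV}$, and both subtract \eqref{eqn:pgform1} from \eqref{eqn:weak} to conclude the orthogonality \eqref{eqn:opt_cond1}. The only cosmetic difference is that the paper works basis function by basis function ($w=\psi_i$) and invokes linearity at the end, whereas you start from an arbitrary $\bar{w}\in\overline{\dbV}$ and invoke linearity of $\mathcal{R}$ up front; your added remarks on surjectivity and on sufficiency of the stationarity condition are sound but not needed for the stated claim.
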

\begin{proof}
Starting from \eqref{eqn:pgform1} and setting $w=\psi_i$, we have
\begin{align*}
    a(\bar{u},\psi_i;\g) - (f,\psi_i) - (\eta,\psi_i)_{\Gamma_\eta} &= 0 \quad \forall \ 1 \leq i \leq N \notag \\
    \implies (\bar{u},\mathcal{R}(\psi_i))_\dagger - (f,\psi_i) - (\eta,\psi_i)_{\Gamma_\eta} &= 0 \quad \forall \ 1 \leq i \leq N \qquad \text{(from \eqref{eqn:riesz})} \notag\\
    \implies (\bar{u},\mathcal{R}(\psi_i))_\dagger  - (u,\mathcal{R}(\psi_i))_\dagger  &= 0 \quad \forall \ 1 \leq i \leq N \qquad \text{(from \eqref{eqn:weak2})} \notag \\
    \implies (\bar{u}-u,\mathcal{R}(\psi_i))_\dagger &= 0 \quad \forall \ 1 \leq i \leq N  \notag \\
    \implies (\bar{u}-u,\phi_i)_\dagger &= 0 \quad \forall \ 1 \leq i \leq N \qquad \text{(using Assumption  \eqref{as:phi_psi})} \notag\\
    \implies (\bar{u}-u,\bar{w})_\dagger &= 0 \quad \forall \ \bar{w} \in \bar{\dbV} = \text{span} \{\phi_1,\cdots,\phi_N\}.
\end{align*}
\end{proof}

Using the Riesz representation \eqref{eqn:riesz} with \eqref{eqn:pgform1} under the Assumption \eqref{as:phi_psi} gives us the alternate \textit{symmetrized} Petrov-Galerkin formulation: Find $\bar{u} \in \overline{\dbV} = \text{span}\{\phi_1,\cdots,\phi_N\}$ such that
\begin{equation}\label{eqn:pgform2}
    (\bar{u},\phi_i)_\dagger = (f,\psi_i) + (\eta,\psi_i)_{\Gamma_\eta} \qquad \forall \ 1 \leq i \leq N
\end{equation}
which has the advantage of not requiring explicit knowledge of the underlying PDE. However, we need to be able to determine $\psi_i$ from $\phi_i$ which requires knowledge of the projectors $\mathcal{R}$, or rather its inverse. The invertibility of $\mathcal{R}$ is guaranteed 
due to the bilinearity of $a(.,.;\g)$ when $\dbV = H^1_0(\Omega)$ \cite{aziz1972survey}. The explicit (or approximate) construction of $\mathcal{R}^{-1}$ has been explored for simple one-dimensional and two-dimensional problems but, in general, such constructions are not available. 

Alternatively, we can recover $\psi_i$ from $\phi_i$ by solving the following adjoint problem: Find $\psi_i \in \dbV$ such that
\begin{equation}\label{eqn:weakform_for_psi}
    a(w,\psi_i;\g) = (w,\phi_i)_\dagger  \quad \forall \ w \in \dbV,
\end{equation}
which is equivalent to solving the weak adjoint problem associated with \eqref{eqn:gen_pde} where $a(w,\psi;\g) = a^*(\psi,w;\g)$. Recovering $\{\psi_i\}_{i=1}^N$ using this strategy comes with the following challenges:
\begin{enumerate}
    \item Given $N$ trial basis functions $\{\phi_i\}_{i=1}^N$, we need to recover $N$ weighting functions from \eqref{eqn:weakform_for_psi}. It is typically not possible to solve \eqref{eqn:weakform_for_psi}, it needs to be approximately solve with a high-order numerical solver (finite element method, isogeomgetric analysis, etc.).
    \item Since $\{\psi_i\}_{i=1}^N$ depend on $\g$, a new set of $N$ adjoint problems needs to be solved to recover the weighting functions each time the PDE data $\g$ changes. 
\end{enumerate}

In this work, we propose a mathematically sound deep operator learning approach to resolve the first challenge listed above, which would lay the necessary groundwork to resolve the second challenge (to appear in a follow-up work).

\section{Petrov-Galerkin VarMiON}\label{sec:PGVARMION}
We now propose a deep operator learning framework that is motivated by the alternate Petrov-Galerkin formulation \eqref{eqn:pgform2}. We assume explicit knowledge of a suitable trial basis $\bPhi(\x) = [\phi_1(\x),\cdots,\phi_N(\x)]^\top \in \Ro^N$ which spans $\bar{\dbV} \subset \dbV$. Our goal is then two-fold:
\begin{enumerate}
    \item Given partial information about an $f \in \dbF$ and $\eta \in \dbN$, such as the value of $f$ and $\eta$ at a set of finite nodes, determine an accurate approximation $\hat{u}$ of the Petrov-Galerkin solution $\bar{u}$ solving \eqref{eqn:pgform2}.
    \item Learn the optimal weighting functions $\{\psi_i\}_{i=1}^N$ in an unsupervised manner.
\end{enumerate}

We begin by introducing a few useful notations that will allow us to express various solution frameworks in a compact form. Consider a generic vector of $N$ functionals $\bups(\x) = [\Upsilon_1(\x),\cdots,\Upsilon_N(\x)]^\top \in \Ro^N$. We introduce the vector $\bm{\ell}_{\bups}(f,\eta) \in \Ro^N$ to represent the exact evaluations:
\begin{equation}\label{eqn:l_bups}
[\ell_{\bups}(f,\eta)]_i = (f,\Upsilon_i) + (\eta,\Upsilon_i)_{\Gamma_\eta} \qquad 1 \leq i \leq N,
\end{equation}
and the vector $\bm{\ell}_{\bups,h}(f,\eta) \in \Ro^N$ to denote the corresponding discretized inner-product evaluations: 
\begin{equation}\label{eqn:l_bupsh}
[\ell_{\bups,h}(f,\eta)]_i=\sum_{k=1}^{N_s}\gamma_k\Upsilon_i(\x_k)f(\x_k) + \sum_{k=1}^{N_b}\gamma_k^b\Upsilon_i(\x_k^b)\eta(\x_k^b) \qquad 1 \leq i \leq N,
\end{equation}
where $\{(\x_k,\gamma_k)\}_{k=1}^{N_s}$ are the quadrature nodes and weights corresponding to a quadrature rule in $\Omega$, while $\{(\x^b_k,\gamma^b_k)\}_{k=1}^{N_b}$ are the quadrature nodes and weights corresponding to a quadrature rule on $\Gamma_\eta$. It is not hard to see that that for any constant matrix $\B \in \Ro^{N \times N}$, the following holds true
\begin{equation}\label{eqn:prop_ell}
    \bm{\ell}_{\B \bups}(f,\eta) = \B \bm{\ell}_{\bups}(f,\eta), \qquad \bm{\ell}_{\B \bups,h}(f,\eta) = \B \bm{\ell}_{\bups,h}(f,\eta).
\end{equation}

We assume that the data $\g$ parameterizing the PDE \eqref{eqn:gen_pde} is given and fixed. Since $\bar{u} \in \bar{\dbV}$, we can express it as
\begin{equation}\label{eqn:projected}
\bar{u}(\x) = \sum_{i=1}^N \bar{u}_i \phi_i(\x) = \bm{\bar{u}}^\top \bPhi(\x),
\end{equation}
where $\bm{\bar{u}} = [\bar{u}_1(\x),\cdots,\bar{u}_N(\x)]^\top \in \Ro^N$ is the coefficient vector. Substituting this expansion into \eqref{eqn:pgform2} leads to the following equivalent linear system of equations for the coefficients of the optimal Petrov-Galerkin solution
\begin{equation}\label{eqn:pgform_sys}
    \M \bm{\bar{u}} = \bm{\ell}_{\bPsi}(f,\eta) \qquad \text{where} \quad M_{ij} = (\phi_i,\phi_j)_\dagger, \quad \forall \ 1 \leq i,j \leq N,
\end{equation}

We need the following additional components to build the operator network that mimics \eqref{eqn:pgform_sys}:
\begin{itemize}
\item We compute the mass matrix $\M$ in \eqref{eqn:pgform_sys} and its inverse $\M^{-1}$ by using the known trial basis $\bPhi(\x)$. These matrices can be computed exactly when the inner-products $(\phi_i,\phi_j)_\dagger$ have a closed form expression, or using a very highly accurate quadrature rule. Note that these matrices only need to be computed once (offline).
\item We choose the quadrature nodes $\{\x_k\}_{k=1}^{N_s} \subset \Omega$ as \textit{sensor nodes} on which the source function $f$ is sampled to create the vector $\F = [f(\x_1),\cdots,f(\x_{N_s})]^\top \in \Ro^{N_s}$.
\item We choose the boundary quadrature nodes $\{\x^b_k\}_{k=1}^{N_b} \subset \Gamma_\eta$ as \textit{boundary sensor nodes} on which the boundary flux $\eta$ is sampled to create the vector $\N = [\eta(\x^b_1),\cdots,\eta(\x^b_{N_b})]^\top \in \Ro^{N_b}$.
\item We consider a network $\NN(.;\bm{\theta}):\Omega \rightarrow \Ro^N$ with learnable parameters (weights and biases) $\bm{\theta}$. We assume that when transformed by the mass matrix, this network approximates the optimal Petrov-Galerkin weighting functions,
\begin{equation}\label{eqn:NN}
    \M \NN(\x;\bm{\theta}) =: \bhPsi(\x;\bm{\theta}) \approx \bPsi(\x) 
\end{equation}
with $\bPsi(\x) = [\psi_1(\x),\cdots,\psi_N(\x)]^\top \in \Ro^N$ and $\bhPsi(\x;\bm{\theta}) = [\hpsi_1(\x;\bm{\theta}),\cdots,\hpsi_N(\x;\bm{\theta})]^\top \in \Ro^N$.
\item Using the above network and the chosen quadrature nodes/weights, we approximate the $L^2$ inner-products between each component of the network output and $f$, $\eta$ as
\[
(\mathcal{N}_i,f) \approx \sum_{k=1}^{N_s} \gamma_k \mathcal{N}_i(\x_k;\bm{\theta}) f(\x_k), \qquad (\mathcal{N}_i,\eta)_{\Gamma_\eta} \approx \sum_{k=1}^{N_b} \gamma^b_k \mathcal{N}_i(\x^b_k;\bm{\theta}) \eta(\x^b_k)  \qquad \forall \ 1 \leq i \leq N.
\]
With $\G = \text{diag}(\gamma_1,\cdots,\gamma_{N_s})$ and $\G^b = \text{diag}(\gamma^b_1,\cdots,\gamma^b_{N_b})$, we define the vector $\bbeta \in \Ro^N$ as
\begin{align}\label{eqn:branch}
\bbeta &= \bm{\ell}_{\NN,h}(f,\eta) = \A \G \F + \A^b \G^b \N \\
\text{where} \qquad  A_{ij} &= \mathcal{N}_i(\x_j;\bt) \quad \forall \ 1\leq i \leq N, \ 1 \leq j \leq {N_s} \notag\\
A^b_{ij} &= \mathcal{N}_i(\x^b_j;\bt) \quad \forall \ 1\leq i \leq N, \ 1 \leq j \leq {N_b} \notag
\end{align}
\end{itemize}
Using the above components, the approximate solution given by the operator network is expressed as
\begin{equation}\label{eqn:varmion}
\hat{u}(\x;\bm{\theta}) = \bbeta^\top \bPhi(\x) 
\end{equation}
The schematic of our Petrov-Galerkin Variationally Mimetic Operator Network (\pgnet) is shown in Figure \ref{fig:varmionpg}. Note that the mass matrix $\M$ isn't explicitly used in \pgnet. However, it is needed when we want to recover $\bhPsi$ from $\NN$. 

We remark here that the \pgnet solution mimics the formulation in \eqref{eqn:pgform_sys}. To see this,
note that by using \eqref{eqn:branch} with the notation in \eqref{eqn:NN} and the property \eqref{eqn:prop_ell} we have
\begin{equation}\label{eqn:beta_form}
\M \bbeta = \M \bm{\ell}_{\NN,h}(f,\eta) = \bm{\ell}_{\M \NN,h}(f,\eta) = \bm{\ell}_{\bhPsi,h}(f,\eta) \approx \bm{\ell}_{\bPsi}(f,\eta)
\end{equation}
which implies $\bbeta \approx \bm{\bar{u}}$.
We will make these approximation more precise in Theorem \ref{thm:error}. Note that if $\bPhi$ is an orthonormal basis with respect to $\|.\|_\dagger$, then $\M = \bm{I}$ and $\NN = \bhPsi$. 

\begin{figure}[!htb]
    \centering
    \includegraphics[width=\linewidth]{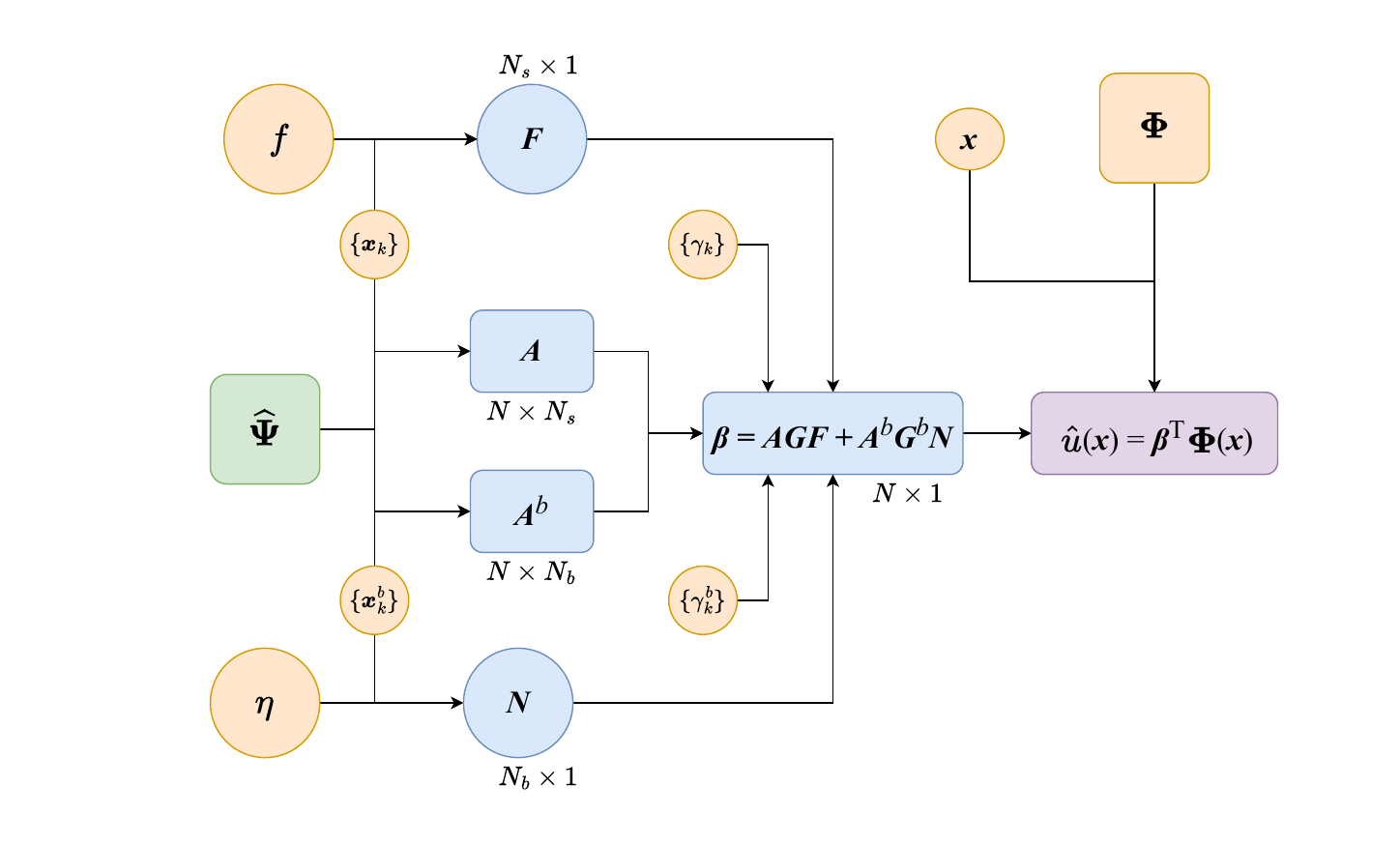}
    \caption{Schematic of the \pgnet algorithm where the only trainable component is the block in green.}
    \label{fig:varmionpg}
\end{figure}

\subsection{Training}
\pgnet is trained in a supervised manner, thus requiring a labeled training dataset. We outline the dataset generation procedure below:
\begin{enumerate}
    \item Select a suitable $\g \in \dbG$. This will remain fixed.
    \item Choose $\{(f^{(j)},\eta^{(j)})\}_{j=1}^{N_f} \subset \dbF \times \dbN$ sampled independently based on a suitable probability measure on $\dbF \times \dbN$.
    \item For each $1\leq j \leq N_f$ find the solution $u^{(j)} \in \dbV$ satisfying \eqref{eqn:weak} with the PDE data $(f^{(j)},\eta^{(j)},\g)$. In the absence of a closed form expression, the solution can be recovered using a high-order accurate numerical solver.
    \item Generate the input vectors $\F^{(j)} \in \Ro^{N_s}$, $\N^{(j)} \in \Ro^{N_b}$ by evaluating each $f^{(j)}$ and $\eta^{(j)}$ at the sensor nodes $\{\x_i\}_{i=1}^{N_s}$ and $\{\x^b_i\}_{i=1}^{N_b}$, respectively.
    \item Select a set of \textit{output nodes} $\{\xh_l\}_{l=1}^{N_o}$. For each $1 \leq j\leq N_f$, sample the exact/reference solution at these nodes as $u^{(j)}_{l} = u^{(j)}(\xh_l)$. 
    \item Collect all the inputs and output labels to form the training set
    \begin{equation}\label{eqn:tset}
        \mathbb{S} = \{(\F^{(j)},\N^{(j)},\xh_l,u^{(j)}_{l}) : 1 \leq j \leq N_f, \ 1 \leq l \leq N_o\} \quad \text{with} \ \ |\mathbb{S}| = N_f N_o.
    \end{equation}
\end{enumerate}

The loss/objective function is based on the optimal norm $\|.\|_\dagger$ in \eqref{eqn:opt_soln1}. We treat the output nodes used in the data generation algorithm as quadrature nodes in $\Omega$ with associated quadrature weights $\{\hat{\gamma}_l\}_{l=1}^{N_o}$. We then denote the discrete optimal norm by $\|.\|_{\dagger,h}$. For example, if $\|.\|_\dagger = \|.\|_{L^2(\Omega)}$, then
\begin{equation}\label{eqn:disc_L2}
\| u \|^2_{\dagger,h} = \sum_{l=1}^{N_o} \hat{\gamma}_l u(\xh_l) \approx \| u \|^2_{\dagger}
\end{equation}
We denote the \pgnet solution corresponding to $(f^{(j)},\eta^{(j)})$ as $\hat{u}^{(j)}(\x;\bt)$. Then the operator network is trained by solving the following optimization problem
\begin{equation}\label{eqn:loss}
\begin{aligned}
    \bt^* &= \argmin{\bt} \ \Pi_h(\bt)\\
    \text{where} \quad \Pi_h(\bt) &= \frac{1}{N_f} \sum_{j=1}^{{N_f}} \|u^{(j)} - \hat{u}^{(j)}(.;\bt)\|_{\dagger,h}^2.
\end{aligned}
\end{equation}


\subsection{Theoretical analysis of \pgnet}\label{sec:error}
To make the dependence on $f,\eta$ explicit, let us denote by $\bar{u}(\x;f,\eta)$ the optimal Petrov-Galerkin solution $\bar{u} \in \bar{\dbV}$ corresponding to the norm $\|.\|_\dagger$ for a given $f \in \dbF$ and $\eta \in \dbN$ and the pre-selected trial basis $\bPhi(\x)$ spanning $\bar{\dbV}$. We denote the corresponding \pgnet solution as $\hat{u}(\x;\bt,f,\eta)$. We recall (see \eqref{eqn:projected},\eqref{eqn:pgform_sys},\eqref{eqn:varmion},\eqref{eqn:beta_form}) that these solutions can be expressed in compact form as  
\begin{equation}\label{eqn:compact_soln}
\begin{aligned}
\bar{u}(\x;f,\eta) &= \left( \M^{-1} \bm{\ell}_{\bPsi}(f,\eta) \right)^\top \bPhi(\x)\\
\hat{u}(\x;\bt,f,\eta) &= \bbeta^\top \bPhi(\x) = (\M^{-1} \bm{\ell}_{\bhPsi,h}(f,\eta))^\top \bPhi(\x)
\end{aligned}
\end{equation}
We define the error between the exact (or reference) solution $u(\x;f,\eta)$ and \pgnet solution as $\mathcal{E}(\bt,f,\eta) := \| u(.;f,\eta) - \hat{u}(.;\bt,f,\eta)\|_\dagger$. Similarly, we define the error due to the finite-dimensional projection of the exact solution into $\bar{\dbV}$ as $\mathcal{E}_{\bPhi}(f,\eta) := \| u(.;f,\eta) - \bar{u}(.;f,\eta)\|_\dagger$.

We begin by stating the following simple result that we need for our analysis
\begin{lemma}\label{lem:M}
    Let $\bm{v} \in \Ro^N$. Then the following holds for the mass matrix $\M$ defined in \eqref{eqn:pgform_sys}
    \[
    \|\bm{v}^\top \M^{-1} \bPhi\|_\dagger^2 = (\bm{v}^\top \M^{-1} \bPhi,\bm{v}^\top \M^{-1} \bPhi)_\dagger = \bm{v}^\top \M^{-1} \bm{v}
    \]
\end{lemma}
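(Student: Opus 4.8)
The plan is to expand everything into coordinates using bilinearity of $(\cdot,\cdot)_\dagger$ and then collapse the result using the symmetry of $\M$. First I would record that $\M$ is symmetric: since $M_{ij} = (\phi_i,\phi_j)_\dagger = (\phi_j,\phi_i)_\dagger = M_{ji}$ (the inner product is symmetric), we have $\M^\top = \M$, hence also $\M^{-\top} = \M^{-1}$. Introducing the shorthand $\bm{w} := \M^{-1}\bm{v} \in \Ro^N$, this lets me rewrite $\bm{v}^\top \M^{-1}\bPhi = (\M^{-1}\bm{v})^\top \bPhi = \bm{w}^\top \bPhi = \sum_{i=1}^N w_i \phi_i$, which is an element of $\bar{\dbV}$.

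Next I would compute the squared $\dagger$-norm of this element directly by expanding the inner product and using bilinearity:
\[
\|\bm{w}^\top\bPhi\|_\dagger^2 = \Big(\sum_{i=1}^N w_i\phi_i,\ \sum_{j=1}^N w_j\phi_j\Big)_\dagger = \sum_{i,j=1}^N w_i w_j (\phi_i,\phi_j)_\dagger = \sum_{i,j=1}^N w_i w_j M_{ij} = \bm{w}^\top\M\bm{w}.
\]
Finally, substituting $\bm{w} = \M^{-1}\bm{v}$ back in and using $\M^{-\top}=\M^{-1}$ gives
\[
\bm{w}^\top\M\bm{w} = (\M^{-1}\bm{v})^\top \M (\M^{-1}\bm{v}) = \bm{v}^\top \M^{-1}\M\M^{-1}\bm{v} = \bm{v}^\top\M^{-1}\bm{v},
\]
which is exactly the claimed identity (the middle equality in the statement is just the definition of $\|\cdot\|_\dagger^2$ in terms of $(\cdot,\cdot)_\dagger$).

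There is no substantive obstacle here; the one point requiring care is the appeal to the symmetry of $\M$, i.e. to the fact that $(\cdot,\cdot)_\dagger$ is a genuine (symmetric) inner product, which is what permits moving $\M^{-1}$ freely between the two factors. I would also note in passing that positive-definiteness of $(\cdot,\cdot)_\dagger$ together with linear independence of $\{\phi_i\}_{i=1}^N$ makes $\M$ symmetric positive-definite, so $\M^{-1}$ exists and $\bm{v}^\top\M^{-1}\bm{v}\ge 0$, consistent with the left-hand side being a squared norm.
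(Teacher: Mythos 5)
Your proposal is correct and follows essentially the same route as the paper's proof: expand the $\dagger$-inner product by bilinearity, identify $(\phi_i,\phi_j)_\dagger$ with $M_{ij}$, and cancel using the symmetry of $\M$ (hence of $\M^{-1}$). The only cosmetic difference is that you introduce the shorthand $\bm{w}=\M^{-1}\bm{v}$ before expanding, whereas the paper carries the indices of $\M^{-1}$ through the double sum; the substance is identical.
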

\begin{proof}
We have,
\begin{align*}
   \|\bm{v}^\top \M^{-1} \bPhi\|_\dagger^2 &= (\bm{v}^\top \M^{-1} \bPhi,\bm{v}^\top \M^{-1} \bPhi)_\dagger\\
   &=\Big(\sum_{i,j=1}^{N} v_i M^{-1}_{ij} \phi_j,\sum_{k,l=1}^{N} v_k M^{-1}_{kl} \phi_l\Big)_\dagger\\
   &=\sum_{i,j=1}^{N} \sum_{k,l=1}^{N}v_i M^{-1}_{ij} \underbrace{(\phi_j,\phi_l)_\dagger}_{M_{jl}} M^{-1}_{kl} v_k  \\
   &=\sum_{i,k=1}^{N} v_i \Big[\sum_{j,l=1}^{N} M^{-1}_{ij} M_{jl} M^{-1}_{kl} \Big]v_k  \\
   &=\sum_{i,k=1}^{N} v_i [ \M^{-1} \M \M^{-T} ]_{ik}v_k  \\
   &=\sum_{i,k=1}^{N} v_i M_{ik}^{-T} v_k =\bm{v}^\top \M^{-1} \bm{v} 
\end{align*}
where we used the symmetry of $\M^{-1}$ to get the final expression.
\end{proof}

We now state our main result for estimating the \pgnet error.

\begin{theorem}\label{thm:error}
Assume $\Omega$ is a bounded domain with Lipschitz boundary. Let $f \in \dbF$, $\eta \in \dbN$, and $\lambda_\text{min}$ and $\lambda_\text{max}$ be the smallest and largest (positive) eigenvalues of the symmetric positive definite mass matrix $\M$. Let the sensor nodes $\{\x_k\}_{k=1}^{N_s}$ and the weights $\{\gamma_k\}_{k=1}^{N_s}$ be chosen according to a quadrature rule on $\Omega$ that converges with rate $\alpha$, and let the sensor nodes $\{\x_k^b\}_{k=1}^{N_b}$ and the weights $\{\gamma_k^b\}_{k=1}^{N_b}$ be chosen according to a quadrature rule on $\Gamma_{\eta}$ that converges with rate $\alpha_b$. Then, we can obtain the estimate
    \begin{equation}\label{eqn:err_estimate}
    \begin{aligned}
        \mathcal{E}(\bt,f,\eta) \leq \mathcal{E}_{\bPhi}(f,\eta) + \frac{1}{\sqrt{\lambda_\text{min}}} \Bigg( &\|f\|_{L^2(\Omega)} \sum_{i=1}^{N} \|\psi_i-\hpsi_i(.;\bt)\|_{L^2(\Omega)} \\
        &+  C_\Omega \|\eta\|_{L^2(\Gamma_{\eta})} \sum_{i=1}^N \|\psi_i-\hpsi_i(.;\bt)\|_{H^1(\Omega)} \\
        &+ C_\bhPsi \left( (N_s)^{-\alpha} + (N_b)^{-\alpha_b} \right)\Bigg)
    \end{aligned}
    \end{equation}
where $C_\Omega$ is a constant that depends on $\Omega$ while $C_\bhPsi$ is constant that depends on $f$, $\eta$, $\hpsi_i$ (and possibly their derivatives).
\end{theorem}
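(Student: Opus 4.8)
The plan is to decompose the error via the triangle inequality into the finite-dimensional projection error plus a ``network error'' that measures the discrepancy between the optimal Petrov-Galerkin solution $\bar u$ and the \pgnet solution $\hat u$. Concretely, I would write
\[
\mathcal{E}(\bt,f,\eta) \;=\; \|u - \hat u\|_\dagger \;\leq\; \|u - \bar u\|_\dagger + \|\bar u - \hat u\|_\dagger \;=\; \mathcal{E}_{\bPhi}(f,\eta) + \|\bar u - \hat u\|_\dagger,
\]
so that everything reduces to bounding $\|\bar u - \hat u\|_\dagger$. Using the compact representations in \eqref{eqn:compact_soln}, the difference is $\bar u - \hat u = \big(\M^{-1}(\bm{\ell}_{\bPsi}(f,\eta) - \bm{\ell}_{\bhPsi,h}(f,\eta))\big)^\top \bPhi$. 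Setting $\bm{v} := \bm{\ell}_{\bPsi}(f,\eta) - \bm{\ell}_{\bhPsi,h}(f,\eta) \in \Ro^N$, Lemma \ref{lem:M} gives $\|\bar u - \hat u\|_\dagger^2 = \bm{v}^\top \M^{-1} \bm{v} \leq \lambda_\text{min}^{-1}\|\bm{v}\|_2^2$, hence $\|\bar u - \hat u\|_\dagger \leq \lambda_\text{min}^{-1/2}\|\bm{v}\|_2$. This is the step where the $1/\sqrt{\lambda_\text{min}}$ prefactor enters.

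Next I would split $\bm{v}$ into the part coming from using the wrong (learned) weighting functions and the part coming from replacing exact inner products by quadrature. Insert the intermediate quantity $\bm{\ell}_{\bhPsi}(f,\eta)$ (exact $L^2$ pairings against the \emph{learned} weighting functions):
\[
\bm{v} = \underbrace{\big(\bm{\ell}_{\bPsi}(f,\eta) - \bm{\ell}_{\bhPsi}(f,\eta)\big)}_{\bm{v}_1} + \underbrace{\big(\bm{\ell}_{\bhPsi}(f,\eta) - \bm{\ell}_{\bhPsi,h}(f,\eta)\big)}_{\bm{v}_2},
\]
and use $\|\bm{v}\|_2 \leq \|\bm{v}_1\|_2 + \|\bm{v}_2\|_2$ (the two separate contributions can also be kept without collapsing the bound, matching the structure of \eqref{eqn:err_estimate}). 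For $\bm{v}_1$, componentwise $[\bm{v}_1]_i = (f,\psi_i-\hpsi_i) + (\eta,\psi_i-\hpsi_i)_{\Gamma_\eta}$; bounding the first term by Cauchy-Schwarz in $L^2(\Omega)$ gives $\|f\|_{L^2(\Omega)}\|\psi_i-\hpsi_i\|_{L^2(\Omega)}$, and bounding the boundary term requires a trace inequality $\|\psi_i-\hpsi_i\|_{L^2(\Gamma_\eta)} \leq C_\Omega \|\psi_i-\hpsi_i\|_{H^1(\Omega)}$ (valid since $\Omega$ has Lipschitz boundary), giving $C_\Omega\|\eta\|_{L^2(\Gamma_\eta)}\|\psi_i-\hpsi_i\|_{H^1(\Omega)}$. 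Summing the per-component bounds — and using $\|\cdot\|_2 \leq \|\cdot\|_1$ on $\Ro^N$, which is what converts the vector norm into the sums $\sum_{i=1}^N$ appearing in the statement — yields the first two lines of \eqref{eqn:err_estimate}. For $\bm{v}_2$, each component is the quadrature error for $\int_\Omega \hpsi_i f\,d\x$ and $\int_{\Gamma_\eta}\hpsi_i\eta\,ds$; by the assumed convergence rates $\alpha$, $\alpha_b$ of the two quadrature rules these are bounded by constants (depending on $f$, $\eta$, $\hpsi_i$ and their derivatives, absorbed into $C_\bhPsi$) times $(N_s)^{-\alpha}$ and $(N_b)^{-\alpha_b}$ respectively, giving the third line.

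The main obstacle is really bookkeeping rather than a deep difficulty: one must be careful about which norm is used when passing from the Euclidean norm of $\bm{v}$ to the componentwise sums (the $\ell^2 \le \ell^1$ step), and about the precise hypotheses needed for the trace inequality and for the quadrature error bounds (in particular that $\hpsi_i$, being a smooth neural network output composed with the fixed trial basis via $\M$, has enough regularity for the quadrature rate to apply, and that the constant $C_\bhPsi$ genuinely depends only on the stated quantities and not on $N_s,N_b$). A secondary subtlety is justifying the identity $\M\,\bm{\ell}_{\bhPsi,h}(f,\eta) = \bm{\ell}_{\M\NN,h}(f,\eta) = \bm{\ell}_{\bhPsi,h}(f,\eta)$-type manipulations via property \eqref{eqn:prop_ell} so that $\hat u$ indeed has the stated representation $(\M^{-1}\bm{\ell}_{\bhPsi,h}(f,\eta))^\top\bPhi$; once that representation is in hand, the rest is the triangle-inequality/Cauchy-Schwarz/quadrature chain described above.
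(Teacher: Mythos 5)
Your proposal is correct and follows essentially the same route as the paper: Lemma \ref{lem:M} together with the eigenvalue bound for $\M^{-1}$ to produce the $1/\sqrt{\lambda_\text{min}}$ factor, then $\ell^2\le\ell^1$, Cauchy--Schwarz, the trace inequality, and the quadrature-rate bound. The only (immaterial) difference is that you use the plain triangle inequality for the initial split, whereas the paper uses the optimality condition \eqref{eqn:opt_cond1} to obtain the sharper Pythagorean identity $\mathcal{E}^2(\bt,f,\eta)=\mathcal{E}^2_{\bPhi}(f,\eta)+\mathcal{E}^2_{\bPsi}(\bt,f,\eta)$ before relaxing it to the same additive bound.
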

\begin{proof}
We can express the (squared) error as 
\begin{eqnarray}\label{eqn:error}
    \mathcal{E}^2(\bt,f,\eta) &=& \| u(.;f,\eta) - \hat{u}(.;\bt,f,\eta)\|^2_\dagger \notag \\
    &=&\| u(.;f) - \bar{u}(.:f,\eta) + \bar{u}(.:f,\eta) - \hat{u}(.;\bt,f,\eta)\|^2_\dagger \notag \\
    &=&\| u(.;f,\eta) - \bar{u}(.:f,\eta)\|^2_\dagger + \|\bar{u}(.:f,\eta) - \hat{u}(.;\bt,f,\eta)\|^2_\dagger \notag \\
    && +2\Big( u(.;f,\eta) - \bar{u}(.:f,\eta) \ , \  \bar{u}(.:f,\eta) - \hat{u}(.;\bt,f,\eta)\Big)_\dagger \notag \\
    &=& \mathcal{E}^2_{\bPhi}(f,\eta) + \underbrace{\| \bar{u}(.;f,\eta) - \hat{u}(.;\bt,f,\eta)\|^2_\dagger}_{\mathcal{E}^2_\bPsi(\bt,f,\eta)} \label{eqn:decomposeE}
\end{eqnarray}
where we obtain the final expression using \eqref{eqn:opt_cond1} and the fact that $\bar{u},\hat{u} \in \bar{\dbV}$. In \eqref{eqn:error}, $\mathcal{E}_\bPsi(\bt,f,\eta)$ denotes the error in approximating the projected solution $\bar{u}$ using the \pgnet solution.

Using \eqref{eqn:compact_soln} and Lemma \ref{lem:M} we have
\begin{align*}
   \mathcal{E}^2_\bPsi(\bt,f,\eta) &= \| \bar{u}(.;f,\eta) - \hat{u}(.;\bt,f,\eta)\|^2_{\dagger}\\
   &= \left\| \left( \bm{\ell}_{\bPsi}(f,\eta) - \bm{\ell}_{\bhPsi,h}(f,\eta)\right)^\top \M^{-1} \bPhi(.) \right\|^2_{\dagger}\\
   &=\left( \bm{\ell}_{\bPsi}(f,\eta) - \bm{\ell}_{\bhPsi,h}(f,\eta)\right)^\top \M^{-1} \left( \bm{\ell}_{\bPsi}(f,\eta) - \bm{\ell}_{\bhPsi,h}(f,\eta)\right)
\end{align*}
Note that $\M^{-1}$ is symmetric positive definite with the smallest and largest eigenvalues $1/\lambda_\text{max}$ and $1/\lambda_\text{min}$, respectively. Using the Rayleigh quotient for $\M^{-1}$ gives us
\begin{equation}\label{eqn:rqe}
    \frac{\| \bm{\ell}_{\bPsi}(f,\eta) - \bm{\ell}_{\bhPsi,h}(f,\eta) \|^2_2}{\lambda_\text{max}} \leq \mathcal{E}^2_\bPsi(\bt,f,\eta) \leq \frac{\|\bm{\ell}_{\bPsi}(f,\eta) - \bm{\ell}_{\bhPsi,h}(f,\eta) \|^2_2}{\lambda_\text{min}}
\end{equation}
where $\|.\|_2$ is the usual Euclidean norm. Thus, combining with \eqref{eqn:error} leads to
\begin{equation}\label{eqn:error2}
    \mathcal{E}^2(\bt,f,\eta) \leq \mathcal{E}^2_\bPhi(f,\eta) + \frac{\| \bm{\ell}_{\bPsi}(f,\eta) - \bm{\ell}_{\bhPsi,h}(f,\eta) \|^2_2}{\lambda_\text{min}} \leq \left(\mathcal{E}_\bPhi(f,\eta) + \frac{\| \bm{\ell}_{\bPsi}(f,\eta) - \bm{\ell}_{\bhPsi,h}(f,\eta) \|_2}{\sqrt{\lambda_\text{min}}} \right)^2
\end{equation}

Further, we have the following estimate due to the quadrature approximation on $\Omega$ and $\Gamma_\eta$
\begin{eqnarray}\label{eqn:quad_err}
    [\bm{\ell}_{\bhPsi,h}(f,\eta)]_i &=& \sum_{k=1}^{N_s}\gamma_k\hpsi_i(\x_k;\bt)f(\x_k) + \sum_{k=1}^{N_b}\gamma_k^b\hpsi_i(\x_k^b;\bt)\eta(\x_k^b) \notag \\
    &=& (\hpsi_i(.;\bt),f) + C^f_i(N_s)^{-\alpha} + (\hpsi_i(.;\bt),\eta)_{\Gamma_{\eta}} + C^{\eta}_i(N_b)^{-\alpha_b}
\end{eqnarray}
where $C_i^f$ and $C_i^{\eta}$ are constants that may depend on $f$ and $\eta$, respectively, as well as $\hpsi_i$ (and their derivatives). Since $\|.\|_2 \leq \|.\|_1$ on $\Ro^N$, we have
\begin{eqnarray}\label{eqn:error3}
    \| \bm{\ell}_{\bPsi}(f,\eta) - \bm{\ell}_{\bhPsi,h}(f,\eta) \|_2 &\leq&  \| \bm{\ell}_{\bPsi}(f,\eta) - \bm{\ell}_{\bhPsi,h}(f,\eta) \|_1 \notag \\
    &=& \sum_{i=1}^{N} \left| [\ell_{\bPsi}(f,\eta)]_i - [\ell_{\bhPsi,h}(f,\eta)]_i \right| \notag \\
    &=&  \sum_{i=1}^{N} \left|(\psi_i - \hpsi_i(.;\bt),f) + (\psi_i - \hpsi_i(.;\bt),\eta)_{\Gamma_{\eta}} - C_i^f (N_s)^{-\alpha} -C^{\eta}_i(N_b)^{-\alpha_b} \right| \notag \\
    &\leq& \sum_{i=1}^{N}  \left(\|\psi_i-\hpsi_i(.;\bt)\|_{L^2(\Omega)}\|f\|_{L^2(\Omega)} + \|\psi_i-\hpsi_i(.;\bt)\|_{L^2(\Gamma_{\eta})}\|\eta\|_{L^2(\Gamma_{\eta})}\right) \notag\\
    &&+ \left(|C_i^f| (N_s)^{-\alpha} + |C_i^{\eta}| (N_b)^{-\alpha_b}\right).
\end{eqnarray}
Using a trace inequality, we can obtain the estimate
\begin{equation}\label{eqn:trace}
\|\psi_i-\hpsi_i(.;\bt)\|_{L^2(\Gamma_{\eta})} \leq \|\psi_i-\hpsi_i(.;\bt)\|_{L^2(\Gamma)} \leq C_\Omega \|\psi_i-\hpsi_i(.;\bt)\|_{H^1(\Omega)}
\end{equation}
where $C_\Omega$ is the trace constant that depends only on $\Omega$. Combining \eqref{eqn:error2} with \eqref{eqn:error3}, \eqref{eqn:trace} and setting $C_{\bhPsi} = \max{\left(\sum_{i=1}^{N} |C_i^f|,\sum_{i=1}^{N} |C_i^{\eta}| \right)}$ gives us the desired estimate \eqref{eqn:err_estimate}.
\end{proof}

The above result clearly highlights that we can control the generalization error if the optimal weighting functions $\bPsi$ are accurately approximated by the \pgnet. We empirically demonstrate how this translates to improved generalization on out-of-distribution data in Section \ref{sec:numerics}.

\begin{remark}
From the above analysis, the generalization error in approximating the true solution with \pgnet is bounded from below by the projection error $\mathcal{E}_\bPhi(f,\eta)$. Thus, choosing a good set of trial basis functions $\bPhi$ can facilitate in lowering the overall approximation error.  The approximation properties of finite element spaces in Sobolev norms began with the classic Bramble-Hilbert lemma \cite{bramble1970estimation}. The generalization of the Bramble-Hilbert lemma in isogeometric analysis is presented in \cite{bazilevs2006isogeometric}. The $L^2$ and $H^1$ results for finite elements are summarized in the following estimates for \cite{BABUSKA19905} mesh length $h$ and polynomial degree $p$:  If $u\in H^r(\Omega)$, $1< r\leq p+1$, the interpolation error $u-\bar{u}$ satisfies:
\begin{equation}\label{eqn:proju_l2}
\vertii{u-\bar{u}}^2_{L^2}\leq C (h/p)^{2r} \verti{u}^2_{H^r}.
\end{equation}
\begin{equation}\label{eqn:proju_H1}
\vertii{u-\bar{u}}^2_{H^1}\leq C (h/p)^{2r-2} \verti{u}^2_{H^r}.
\end{equation}
    This estimate also holds for isogeometric analysis \cite{doi:https://doi.org/10.1002/9781119176817.ecm2100}.
\end{remark}

\begin{remark}
We used a general trace inequality \eqref{eqn:trace} to obtain the error estimate \eqref{eqn:err_estimate}. It can be shown that the trace constant $C_\Omega$ typically scales inversely with the length scale of the domain. This constant and the trace inequality can be described more precisely for specific types of domain, especially in the context finite element and isogeometric analysis \cite{evans2013explicit}.
\end{remark}


\section{Numerical Results}\label{sec:numerics}
In this section, we train operator networks to learn the  solution operator for the diffusion problem (in 1D) as well as the advection-diffusion problem (in 1D and 2D) assuming purely homogeneous Dirichlet boundary boundary conditions (i.e., $\Gamma_D = \Gamma$). Thus, we are interested in learning the mapping between the source $f$ to the solution $u$. Through these numerical experiments, we aim to demonstrate: i) the accuracy of \pgnet on unseen data which includes out-of-distribution (OOD) data, and ii) the ability of the proposed method to learn the optimal weighting functions $\bPsi$ in an unsupervised manner. We also compare the results of the \pgnet with those obtained using Fourier Neural Operators (FNOs) and variants of DeepONets. We remark here that FNOs and DeepONets will only be used to compare the test accuracy, as they do not imitate the Petrov-Galerkin structure to allow the recovery of the optimal weighting functions. In all experiments, we choose the optimal norm to be the $L^2$ norm, i.e., $\|. \|_\dagger = \| .\|_{L^2(\Omega)}$. Further, we always work with an orthonormal trial basis $\bPsi$ which ensures that $\M = \bm{I}$ and $\NN = \bhPsi$ (refer to \eqref{eqn:NN}).

\subsection{Training and testing data}\label{sec:data}
In the absence of closed form expressions of the PDE solutions, high-resolution reference solutions are generated using the Nutils finite element solver \cite{nutils7}. 

\textbf{1D problems:} We take the domain to be $\Omega = [0,1]$. The training dataset is constructed by choosing $f$ of the form
\begin{equation}\label{eqn:1D_forcing}
\begin{aligned}
f(\x) &= D \sum_{j=1}^{10} a_j\sin{(j\pi x + b_j)}, \quad a_j \sim \mathcal{U}([-2,2]), \ b_j \sim \mathcal{U}([-1,1]) 
\end{aligned}
\end{equation}
where $D$ is a scaling that normalizes $f$ to take values in [-1,1]. The corresponding reference solution is determined with Nutils using 1502 cubic B-splines basis functions. The input vector $\F$ is constructed by evaluating $f$ at $N_s=40$ nodes corresponding to the Gauss-Legendre quadrature in $\Omega$. The solutions are evaluated and saved at $N_o = 200$ Gauss-Legendre nodes in $\Omega$. Note that for the FNO training/testing, we need to evaluate $f$ and $u$ on the same mesh. Thus, we also save a high-resolution input (i.e., $f$) vector evaluated on the $200$ Gauss-Legendre nodes to deal with the FNO evaluations. The training set comprises 4000 independent samples of $f$. 

We construct three different test datasets. The first one, referred to as DATASET 1, comprises in-distribution samples generated by $f$'s of the form \eqref{eqn:1D_forcing}. The remaining two are OOD datasets called DATASET 2 and DATASET 3, where $f$ is sampled from Gaussian random fields with length scales 0.1 and 0.05, respectively. The $f$ are once again normalized so that $f(\x) \in [-1,1]$. Each test dataset is constructed using 2000 independent samples of $f$. Note that DATASET 3 contains the roughest source functions among the three.

\textbf{2D problem:} Both training and test datasets are constructed by choosing $f$ to be of the form
\begin{equation}\label{eqn:2D_forcing}
\begin{aligned}
f(\x) &= D\sum_{j=1}^{10}\sum_{k=1}^{10} a_{jk}\sin{(j \pi x + b_{jk})}\sin{(k \pi y + c_{jk})}, \quad a_{jk} \sim \mathcal{U}([-2,2]), \ b_{jk},c_{jk} \sim \mathcal{U}([-1,1]) 
\end{aligned}
\end{equation}
where as earlier $D$ normalizes $f$ to take vales in $[-1,1]$. The reference data is generated using $52\times52$ tensorized cubic B-spline basis functions in 2D. We remark here that the computational cost and memory requirements (both for data generation and training the networks) is significantly larger for the 2D problem as compared to the 1D simulations. Thus, we have chosen a lower resolution to generate the data for the 2D setup.

The training set comprises 4000 independent samples of $f$, while the test set uses 2000 samples. The input vector $\F$ is constructed by evaluating $f$ at $N_s=40 \times 40$ tensorized nodes corresponding to the Gauss-Legendre quadrature in $\Omega$, i.e., $N_s = 40$. The solutions are evaluated and saved at $N_o = 67$ uniform nodes in $\Omega$. 

\begin{remark}
    To maintain a balance between the computational (and memory) resources used during training the \pgnet (also L-DeepONet and BNet) while ensuring a diversity in probing the reference solutions (labels), we do not use all $N_o$ output sensor nodes to define the training loss. Instead, for each $f$ in the training set, we randomly pick $N_r < N_o$ nodes from the available $N_o$ to define the training loss. However, all $N_o$ nodes are used to evaluate the test errors. We choose $N_r = 20$ for 1D diffusion, $N_r = 30$ for 1D advection-diffusion, and $N_r=60$ for 2D advection-diffusion.
\end{remark}

\subsection{Network architectures}\label{sec:net_arch}
We describe the network architecture for the various operator networks considered in the numerical experiments. Tables \ref{tab:1D_D_network_summary}, \ref{tab:1D_AD_network_summary}, and \ref{tab:2D_network_summary} summarize most of the common hyperparameters chosen for each network used for each considered problem. The remaining hyperparameters are specified in the text when discussing each problem.

\textbf{\pgnet:} The overall structure of this surrogate model is depicted in Figure \ref{fig:varmionpg}. The only trainable component is the green block which produces the $\bhPsi$. We use a simple feedforward multi-layer perceptron (MLP). The activation function $\sigma$ in the hidden layers of the MLP is taken as the hat function which can be written using a combination of three ReLU functions
\begin{equation}\label{eqn:hat}
\sigma(z) = \text{ReLU}\left(z\right) -\ \text{ReLU}\left(2z-2\right)\ +\ \text{ReLU}\left(z-2\right).     
\end{equation}
The hat function has been observed to overcome spectral bias in networks \cite{hong2022} by acting as a high-pass filter and leading to faster training; see also \cite{92d621cad5bd4fef8fafbf3ce34211f9} for its use in finite element deep learning applications. To enforce homogeneous Dirichlet boundary conditions, we multiply each component of the network's output by the cut-off function
\begin{equation}\label{eqn:cutoff}
g(\x) = 1 + \frac{e^{px} + e^{-p(x-1)}}{1-e^{p}},
\end{equation}
with a very large value for $p$. Figure \ref{fig:cutoff} shows the cut-off function for $p=100$ and $p=400$.

\begin{figure}
    \centering
    \includegraphics[width=0.5\linewidth]{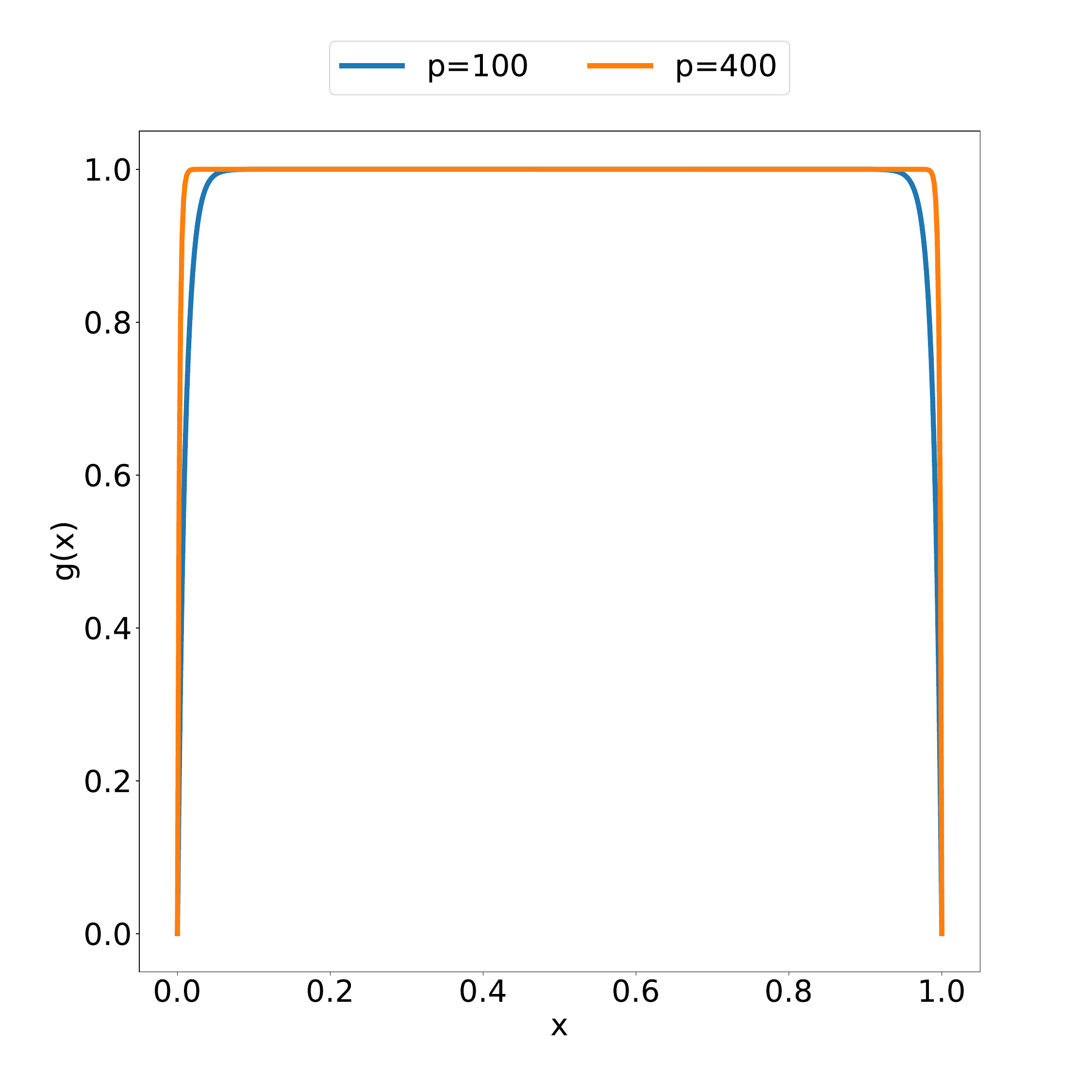}
    \caption{Cut-off function used to enforce the homogeneous Dirichlet boundary conditions.}
    \label{fig:cutoff}
\end{figure}

\textbf{L-DeepONet:} This is a variant of the DeepONet with a trainable nonlinear trunk $\btau: \Ro^d \rightarrow \Ro^q$ but a learnable linear branch, since the PDE solution operators we are learning are linear in $f$. This is meant to provide a structural advantage to the network. Thus, the architecture consists of an input vector $\F \in \Ro^{N_s}$ that is acted on by a learnable matrix $\B$ to produce coefficients for the trunk functions. Thus, the L-DeepONet solution approximation is given by $\hat{u}(\x) = (\B \F)^\top \btau(\x)$. Note that the shape of $\B$ is fixed as $q \times N_s$, where $q$ is the latent dimension of the DeepONet. For our experiments, we choose the $\btau$ to have the same structure as the learnable $\bhPsi$ in \pgnet, i.e., an MLP with the hat activation function \eqref{eqn:hat} in the hidden layers and homogeneous Dirichlet boundary conditions enforced using \eqref{eqn:cutoff} at the end of the network. Further, we pick $q = N$ for all experiments, where we recall that $N$ is the number of trial basis functions used in the \pgnet.

\textbf{BNet:} This network is a simplification of \pgnet in that the trial basis $\bPhi$ is pre-determined and fixed. However, learnable components are replaced by a single trainable matrix $\B$, with the final solution given as $\hat{u}(\x) = (\B \F)^\top \bPhi(\x)$. When compared to the \pgnet (see Figure \ref{fig:varmionpg}), we note that $\B$ replaces $\A \G$ without having the specialized structure of $\A$ in terms of $\bhPsi$. When compared to L-DeepONet, BNet replaces the learnable trunk $\btau$ with the fixed trial basis $\bPhi$. We note that the number of trainable parameters of the BNet is strictly determined by the number of sensor locations and the size of the trial basis. This leads to a significantly small network for 1D problems (see Table \ref{tab:1D_D_network_summary} and \ref{tab:1D_AD_network_summary}) compared to the other operator networks, but a significantly larger network for the 2D problem (see Table \ref{tab:2D_network_summary}).

\textbf{FNO:} FNO is originally designed as a function-to-function mapping via integral operators. In particular, an $L$-layer NO has the following form:
\begin{equation}\label{eqn:no}
\mathcal{Q}\circ\mathcal{J}_L\circ\cdots\circ \mathcal{J}_1\circ\mathcal{P}[f](\x)\approx u(\x)\text{ ,}
\end{equation}
where $\mathcal{P}$, $\mathcal{Q}$ are shallow-layer neural networks that map a low-dimensional vector into a high-dimensional vector and vice versa. Each intermediate layer, $\mathcal{J}_l$, consists of a local linear transformation operator, an integral (nonlocal) kernel operator, and an activation function $\sigma$:
\begin{align}
\nonumber\mathcal{J}^{l}[\bm{h}](\x)=&\sigma\left(\W^l \bm{h}(\x)+ \bm{b}^l+\mathcal{F}^{-1}[\mathcal{F}[\bm{k}(\cdot;\bt^l)]\cdot \mathcal{F}[\bm{h}(\cdot)]](\x)\right)\text{ ,}\label{eq:fno}
\end{align}
where $\W^l\in\Ro^{d_h\times d_h}$ and $\bm{b}^l\in\Ro^{d_h}$ are learnable tensors at the $l$-th layer, and $\bm{k}\in\Ro^{d_h\times d_h}$ is a tensor kernel function with parameters $\bt^l$. $\mathcal{F}$ and $\mathcal{F}^{-1}$ denote the Fourier transform and its inverse, respectively, which are computed using the FFT algorithm to each component of $\bm{h}$ separately.

To perform the above FFT calculation, FNO generally requires measurements on a rectangular domain with uniform meshes, then it maps the vector of all measurements of each sample function $f(\x)$ to the corresponding measurement vector of $u(\x)$. To alleviate the uniform mesh requirement, here we follow the ideas in \cite{li2024geometry,liu2024domain} and include an analytical mapping from non-uniform mesh grids to uniform mesh grids. As such, the input and output functions are both evaluated on the same Gauss-Legendre quadrature nodes at which the solution is known. However, we point out that the resultant FNO model still takes the measurements on all points in $f$ as input and maps it to the measurements on all points in $u$.

\begin{table}
\centering
\begin{tabular}{ |c|c|c|c|c| }
\hline
& \pgnet & L-DeepONet & BNet & FNO \\
\hline
Activation Function & \multicolumn{2}{c|}{Hat} & None & GeLU \\
\hline
Cut-off function \eqref{eqn:cutoff} & \multicolumn{2}{c|}{Yes} & \multicolumn{2}{c|}{No} \\
\hline
$p$ in \eqref{eqn:cutoff} & \multicolumn{2}{c|}{100} & \multicolumn{2}{c|}{N/A} \\
\hline
No. of Parameters & 1180 & 1580 & 400 & 1154 \\
\hline
Optimizer & \multicolumn{3}{c|}{AdamW} & Adam\\
\hline
AdamW $\beta_1$ & \multicolumn{3}{c|}{0.5} & N/A \\
\hline
AdamW $\beta_2$ & \multicolumn{3}{c|}{0.9} & N/A\\
\hline
Weight decay & \multicolumn{3}{c|}{0} & $10^{-7}$ \\
\hline
Epochs & \multicolumn{4}{c|}{1000} \\
\hline
LR Scheduler & \multicolumn{4}{c|}{Yes} \\
\hline
Initial LR & \multicolumn{3}{c|}{$10^{-3}$} & $10^{-2}$ \\
\hline
LR Scheduler step & \multicolumn{4}{c|}{100} \\
\hline
LR Scheduler $\gamma$ & \multicolumn{3}{c|}{0.75} & 0.7 \\
\hline
Batch size & \multicolumn{3}{c|}{8000} & 200 (functions) \\
\hline
\end{tabular}
\caption{Summary of training settings for all networks in the 1D pure diffusion problem. Note here the batch size of FNO counts the number of function pairs (consists of $200$ evaluation points per function) in each batch, while other three methods counts the number of evaluation points.}
\label{tab:1D_D_network_summary}
\end{table}

\begin{table}
\centering
\begin{tabular}{ |c|c|c|c|c| }
\hline
& \pgnet & L-DeepONet & BNet & FNO \\
\hline
Activation Function & \multicolumn{2}{c|}{Hat} & None & GeLU \\
\hline
Cut-off function \eqref{eqn:cutoff} & \multicolumn{2}{c|}{Yes} & \multicolumn{2}{c|}{No} \\
\hline
$p$ in \eqref{eqn:cutoff} & \multicolumn{2}{c|}{400} & \multicolumn{2}{c|}{N/A} \\
\hline
No. of Parameters & 3805 & 4405 & 600 & 3953 \\
\hline
Optimizer & \multicolumn{3}{c|}{AdamW} & Adam\\
\hline
AdamW $\beta_1$ & \multicolumn{3}{c|}{0.5} & N/A \\
\hline
AdamW $\beta_2$ & \multicolumn{3}{c|}{0.9} & N/A \\
\hline
Weight decay & \multicolumn{3}{c|}{0} & $10^{-7}$ \\
\hline
Epochs & \multicolumn{4}{c|}{2000} \\
\hline
LR Scheduler & \multicolumn{4}{c|}{Yes} \\
\hline
Initial LR & \multicolumn{3}{c|}{$10^{-3}$} & $10^{-2}$ \\
\hline
LR Scheduler step & \multicolumn{4}{c|}{100} \\
\hline
LR Scheduler $\gamma$ & \multicolumn{3}{c|}{0.75} & 0.9 \\
\hline
Batch size & \multicolumn{3}{c|}{12000} & 200 (functions) \\
\hline
\end{tabular}
\caption{Summary of training settings for all networks in the 1D advection-diffusion problem. Here the batch size of FNO again counts the number of function pairs, while other methods counrs the number of evaluation points.}
\label{tab:1D_AD_network_summary}
\end{table}

\begin{table}
\centering
\begin{tabular}{ |c|c|c|c| }
\hline
& \pgnet & L-DeepONet & BNet \\
\hline
Activation Function & \multicolumn{3}{c|}{Hat} \\
\hline
Cut-off function \eqref{eqn:cutoff} & \multicolumn{2}{c|}{Yes} & No \\
\hline
$p$ in \eqref{eqn:cutoff} & \multicolumn{2}{c|}{100} & N/A \\
\hline
No. of Parameters & 15250 & 175250 & 160000 \\
\hline
Optimizer & \multicolumn{3}{c|}{AdamW} \\
\hline
AdamW $\beta_1$ & \multicolumn{3}{c|}{0.5} \\
\hline
AdamW $\beta_2$ & \multicolumn{3}{c|}{0.9} \\
\hline
Weight decay & \multicolumn{3}{c|}{0} \\
\hline
Epochs & 900 & 900 & 900 \\
\hline
LR Scheduler & \multicolumn{3}{c|}{Yes} \\
\hline
Initial LR & \multicolumn{3}{c|}{$10^{-3}$} \\
\hline
LR Scheduler step & \multicolumn{3}{c|}{100} \\
\hline
LR Scheduler $\gamma$ & \multicolumn{3}{c|}{0.75} \\
\hline
Batch size & 200 & 200 & 200 \\
\hline
\end{tabular}
\caption{Summary of training settings for all networks in the 2D advection-diffusion problem.}
\label{tab:2D_network_summary}
\end{table}


\subsection{Diffusion problem}
We consider the pure diffusion equation on $\Omega = [0,1]$ by setting $\bm{c}=0$ in \eqref{eqn:pde} and taking $\kappa = 0.01$. We choose the Petrov-Galerkin trial basis of size $N=10$ as $\bPhi = \{\sqrt{2} \sin(j \pi x)\}_{j=1}^{10}$ for which the expressions of optimal weighting functions can be explicitly computed as 
\[
\psi_j(\x) = \frac{\sqrt{2}}{j^2 \pi^2 \kappa }\sin(j \pi x), \quad 1 \leq j \leq 10.
\]
Note that $\bPhi$ are also the (scaled) eigenfunctions for the diffusion problem with homogenous Dirichlet boundary conditions. We construct a \pgnet where $\NN$ is as described in Section \ref{sec:net_arch} with 3 hidden layers of widths [10,20,30]. The trained network is then used on the three test datasets described in Section \ref{sec:data}. The histogram (with rug plots) of the relative $L^2$ test errors are shown in Figure \ref{fig:1D_Diff_distributions}, where we also compare with the relative finite dimensional projection error. Theoretically, if \pgnet learned to exact $\bPsi$, its error would match the projection error. In practice $\bhPsi \approx \bPsi$, and thus it is hard to beat the projection error (as also explained in Section \ref{sec:error}). However, as can be observed from the distribution and mean (see numbers in the figure legend) relative errors with the \pgnet are very near the projection error, even on the OOD datasets. In Figures \ref{fig:1D_Diff_samples_dataset1}, \ref{fig:1D_Diff_samples_dataset2}, and \ref{fig:1D_Diff_samples_dataset3}, we show the $f$ and corresponding \pgnet solutions for 4 samples in DATASETS 1, 2, and 3, respectively. The \pgnet solutions are indistinguishable from the reference solutions. 

The \pgnet is designed to implicitly learn the optimal weighting functions. We plot the true $\bPsi$ and the \pgnet approximation $\bhPsi$ in Figure \ref{fig:1D_Diff_Psi}. We can observe that the low frequency modes are captured much better by \pgnet, while the high frequency modes are qualitatively well approximated. Recall from Theorem \ref{thm:error} that the generalization error of the \pgnet depend on the how well $\bPsi$ is approximated by the \pgnet's $\bhPsi$. Since the \pgnet trained for the current problem is able to learn the structure of $\bPsi$, it performs well on the OOD datasets.

Next, we compare the performance of \pgnet with a suitable L-DeepONet, BNet, and FNO. The L-DeepONet has a linear branch taking $\F\in\Ro^{40}$ as input, while the learnable trunk has exactly the same architecture as $\NN$ in our \pgnet. The FNO is composed with linear lifting layer, $3$ iterative layers, and a shallow MLP for the projection layer. 
Here, the input and output functions are both evaluated on the same 200 Gauss-Legendre quadrature nodes at which the solution is known.  
As a result, the FNO is shown significantly more information about $f$ (and $u$ while training) as compared to \pgnet, L-DeepONet, and BNet. Table \ref{tab:1D_Diff_comparisons} compares the mean relative $L^2$ error with all the methods on each test dataset. We observe that \pgnet yields the best performance on all three datasets. Note that the L-DeepONet performs much worse on DATASET 3 in comparison to the other two methods, indicating its poor generalization to OOD data. Interestingly, BNet performs comparably to \pgnet on DATASET 1 and 2. This could be attributed to the fact that both BNet and \pgnet are supplied with a good pre-determined trial basis for this problem. However, the performance of BNet severely deteriorates on the challenging OOD DATASET 3. This clearly indicates that the specialized structure of the matrix $\A$ in \pgnet based on $\bhPsi$ (as dictated by the Petrov-Galerkin formulation) plays a critical role in ensuring better generalization of the operator network.

We claim that emulating the Petrov-Galerkin structure of the PDE can also significantly reduce the data complexity with \pgnet. To demonstrate this, we train the all operator networks on datasets of different sizes (characterized by the number of $f$ samples used). The mean relative test errors of these models are shown in Figure \ref{fig:1D_Diff_trainingsize_variation}. The \pgnet error (accross all test datasets) remains unchanged as the number of training samples is varied, even when the training set is constructed using only 100 samples of $f$. The same robustness does not appear to hold for FNO or L-DeepONet, which require many more training samples to lower the test error. While BNet seems robust on the first two (easier datasets), its approximation is very poor on DATASET 3.

\begin{figure}
    \centering
    \includegraphics[width=\linewidth]{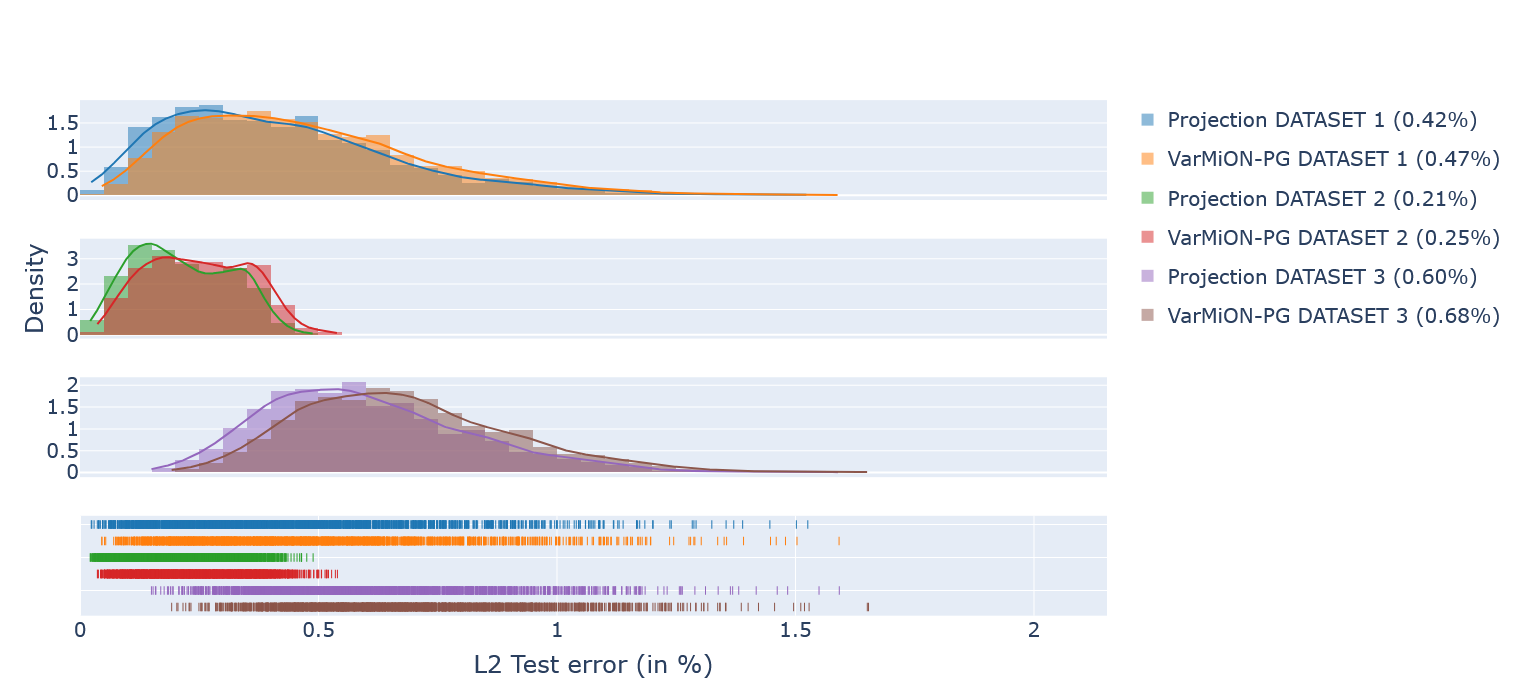}
    \caption{1D diffusion problem: Histograms with rug plots showing the relative $L^2$ error (in \%) with the projected solution $\bar{u}$ and \pgnet solution $\hat{u}$. The average error for each dataset is shown in parentheses.}
    \label{fig:1D_Diff_distributions}
\end{figure}

\begin{figure}
    \centering
    \includegraphics[width=\linewidth]{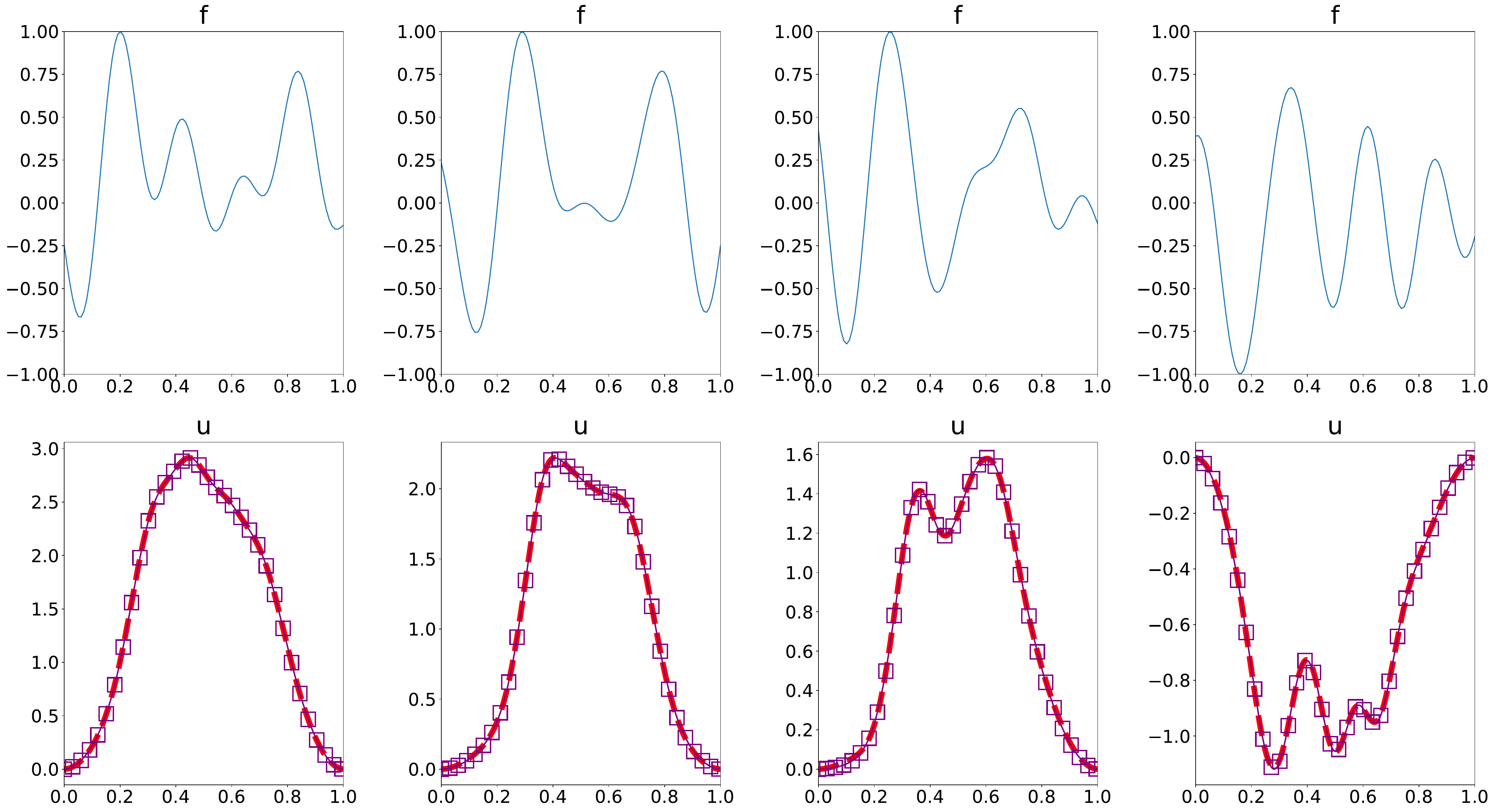}
    \caption{1D diffusion problem: 4 samples from DATASET 1, with the forcing functions $f$ plotted the first row. The corresponding reference solutions (red dashed) and the \pgnet approximations (purple squares) are shown in the second row.}
    \label{fig:1D_Diff_samples_dataset1}
\end{figure}

\begin{figure}
    \centering
    \includegraphics[width=\linewidth]{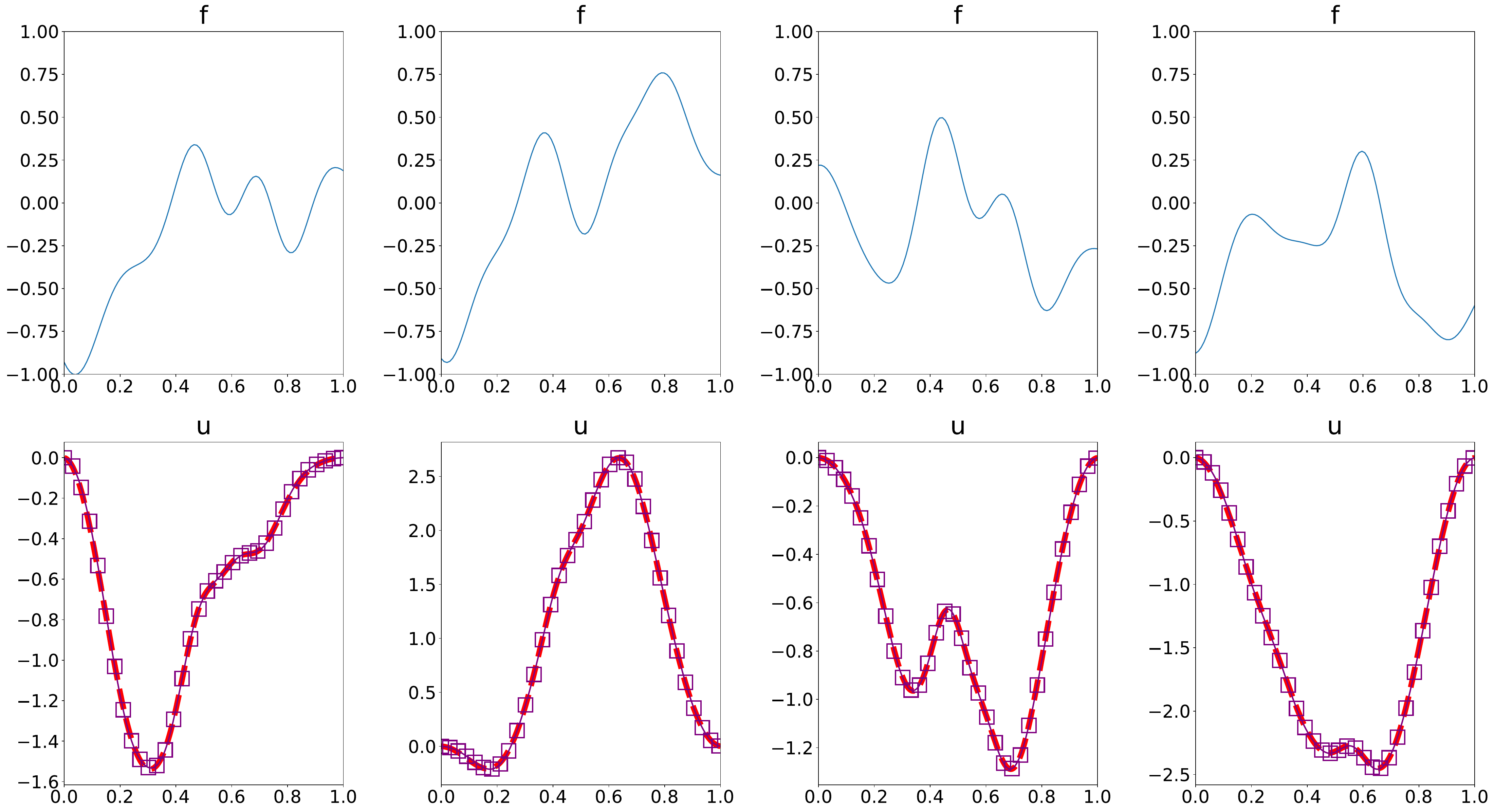}
    \caption{1D diffusion problem: 4 samples from DATASET 2, with the forcing functions $f$ plotted the first row. The corresponding reference solutions (red dashed) and the \pgnet approximations (purple squares) are shown in the second row.}
    \label{fig:1D_Diff_samples_dataset2}
\end{figure}

\begin{figure}
    \centering
    \includegraphics[width=\linewidth]{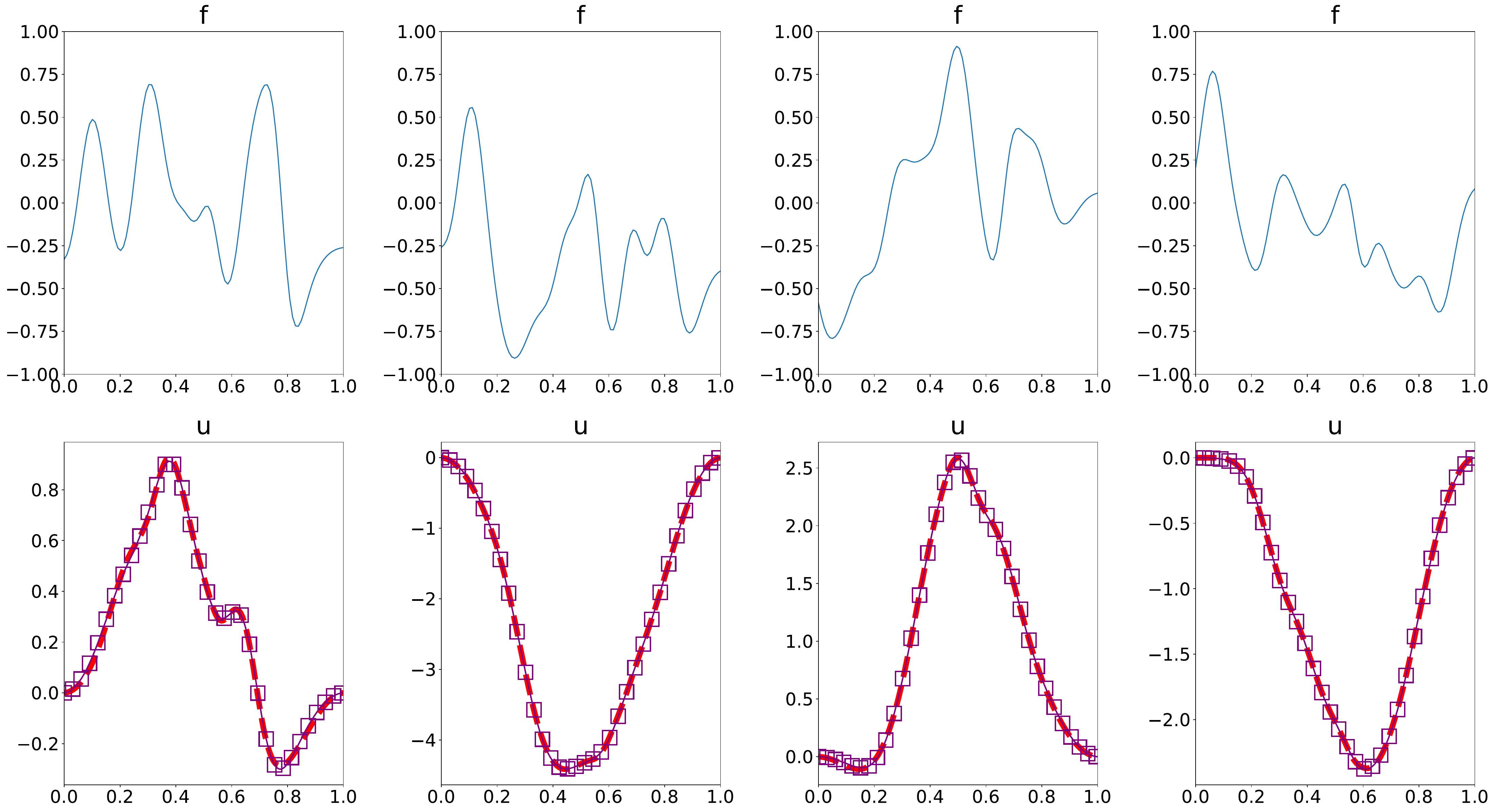}
    \caption{1D diffusion problem: 4 samples from DATASET 3, with the forcing functions $f$ plotted the first row. The corresponding reference solutions (red dashed) and the \pgnet approximations (purple squares) are shown in the second row.}
    \label{fig:1D_Diff_samples_dataset3}
\end{figure}

\begin{figure}
    \centering
    \includegraphics[width=\linewidth]{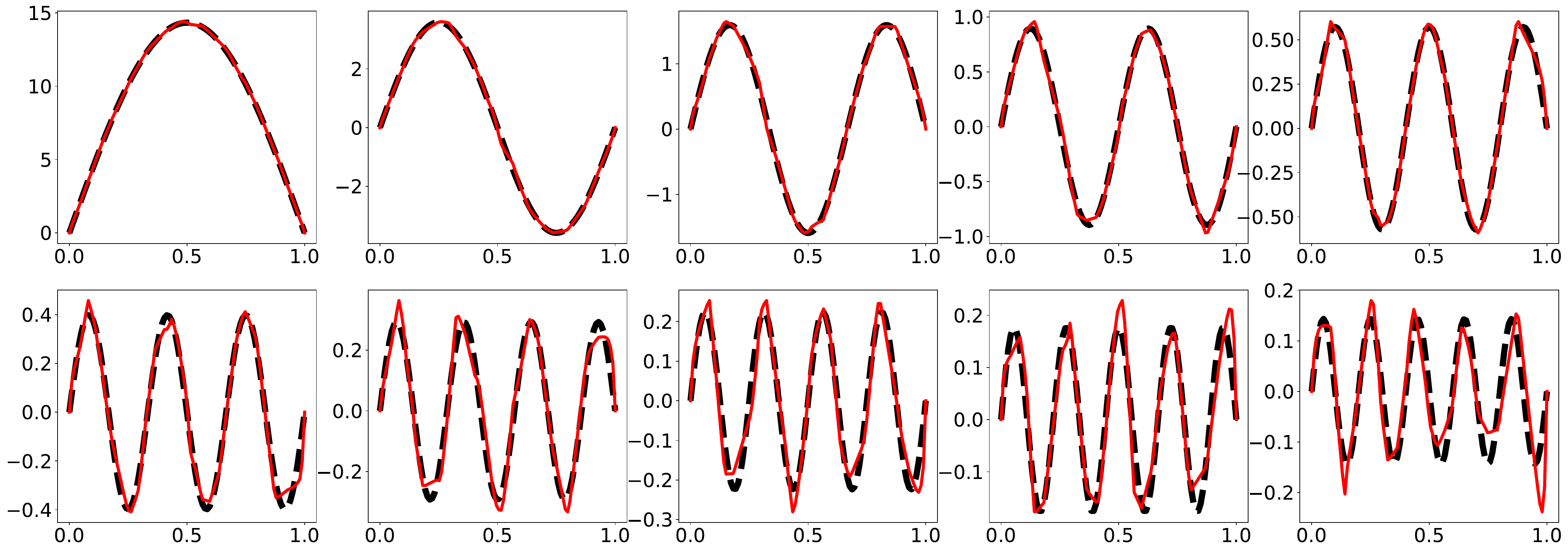}
    \caption{1D diffusion problem: Comparison of the exact weighting functions $\bPsi$ (black) and the approximations $\bhPsi$ yielded by \pgnet (red).}
    \label{fig:1D_Diff_Psi}
\end{figure}

\begin{table}
\centering
\begin{tabular}{ |c|c|c|c|c| }
\hline
Method & No. of Parameters & DATASET 1 & DATASET 2 & DATASET 3 \\
\hline
Projection & - & 0.42 & 0.21 & 0.60 \\
\pgnet & 1180 & 0.47 & 0.25 & 0.68 \\
FNO & 1154 & 1.01 & 0.82 & 1.04 \\
L-DeepONet & 1580 & 2.11 & 1.24 & 9.06 \\
BNet & 400 & 0.44 & 0.35 & 19.88 \\
\hline
\end{tabular}
\caption{1D diffusion problem: Comparison of mean relative $L^2$ test error (in \%) with the various methods.}
\label{tab:1D_Diff_comparisons}
\end{table}

\begin{figure}
    \centering
    \includegraphics[width=\linewidth]{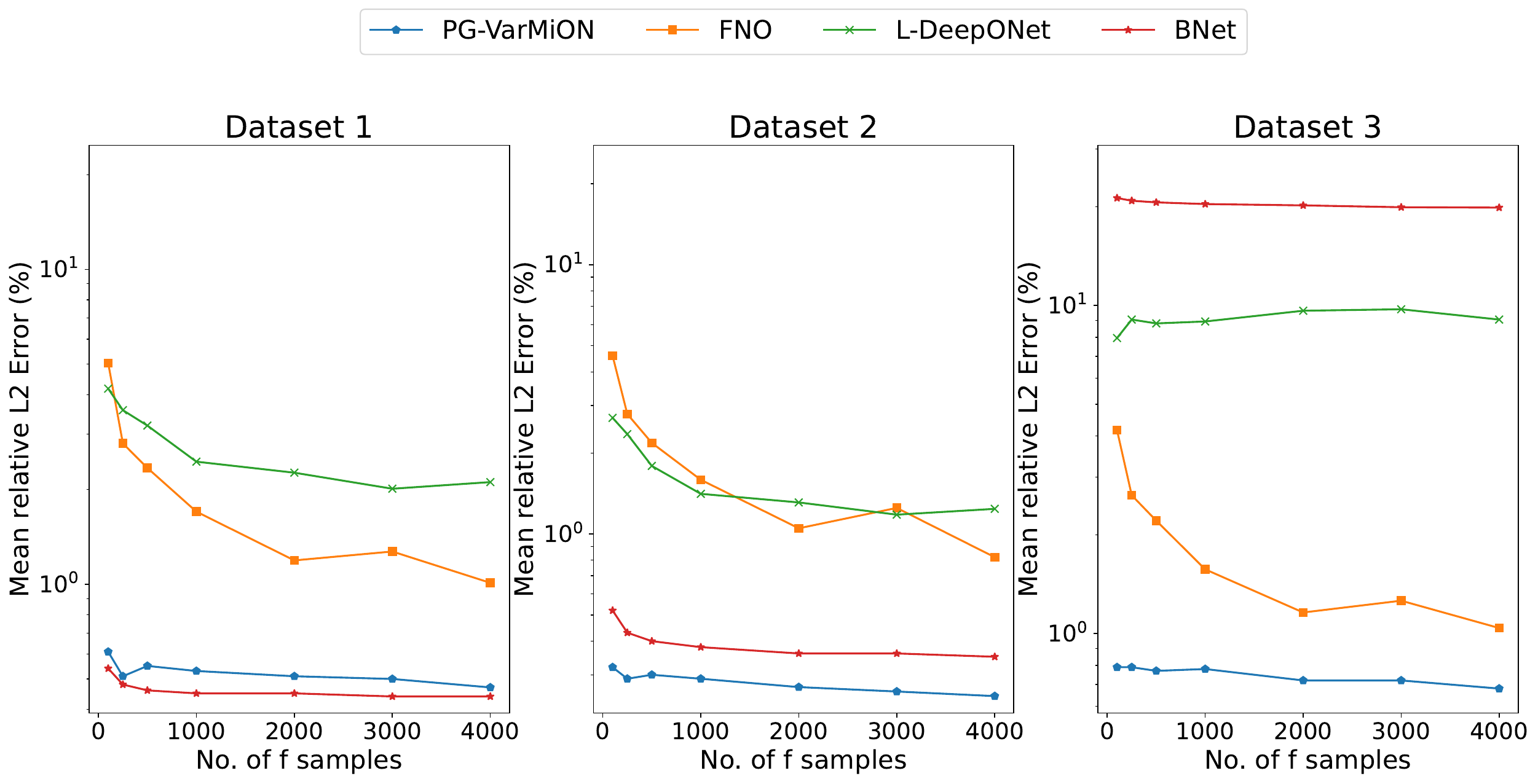}
    \caption{1D diffusion problem: Mean relative test error (in \%) with the various operator models as number of $f$ samples in the training set is varied.}
    \label{fig:1D_Diff_trainingsize_variation}
\end{figure}


\subsection{Advection-diffusion problem}

Now, we consider the advection-dominated problem on $\Omega=[0,1]$ by setting $\bm{c}=0.1$ and $\kappa=10^{-4}$ in \eqref{eqn:pde}.
The training and test datasets are constructed by selecting forcing functions $f$ as described in Section \ref{sec:data}. The strong advection here results in a boundary layer at the right boundary, which is difficult to resolve. If we use a trial basis consisting of sine functions as in the pure diffusion problem, achieving good performance can require more than 100 sine functions of increasing frequency. It is unlikely that \pgnet will adequately resolve such high frequencies. Instead, we opt for a trial basis with a relatively small dimension, which can lead to good approximations by accounting for boundary layers.

To this end, we construct the Petrov-Galerkin trial basis by starting with $\bPhi = \{\sqrt{2} \sin(j \pi x)\}_{j=1}^{5}$ and augmenting the basis with the ten functions given by,

\begin{eqnarray}\label{eqn:1D_boundarylayerbasis}
\Phi_n(\x) &=& \sqrt{2}\frac{\left(\pi \kappa n \sin\left(\pi n x\right)+c\cos\left(\pi n x\right)+h\left(x,n\right)\right)}{\pi n\left(\left(\pi n\kappa\right)^{2}+c^{2}\right)}, \hspace{3mm} 1 \leq n \leq 10\\
\text{where } \quad h(x,n) &=& \frac{\left(-1\right)^{n}\left(e^{\frac{c}{\kappa}\left(1-x\right)}-e^{\frac{c}{\kappa}}\right)c + \left(1-e^{\frac{c}{K}\left(1-x\right)}\right)c}{e^{\frac{c}{\kappa}}-1}. \notag
\end{eqnarray}

This basis thus has dimension 15. Note that the mass matrix corresponding to this basis is severely ill-conditioned, so we use the Gram-Schmidt process to transform the basis into an orthonormal trial basis. We have observed that this helps maintain the stability of \pgnet. The resulting trial basis can be seen in Figure \ref{fig:1D_AdvDiff_Phi}.

\begin{figure}
    \centering
    \includegraphics[width=\linewidth]{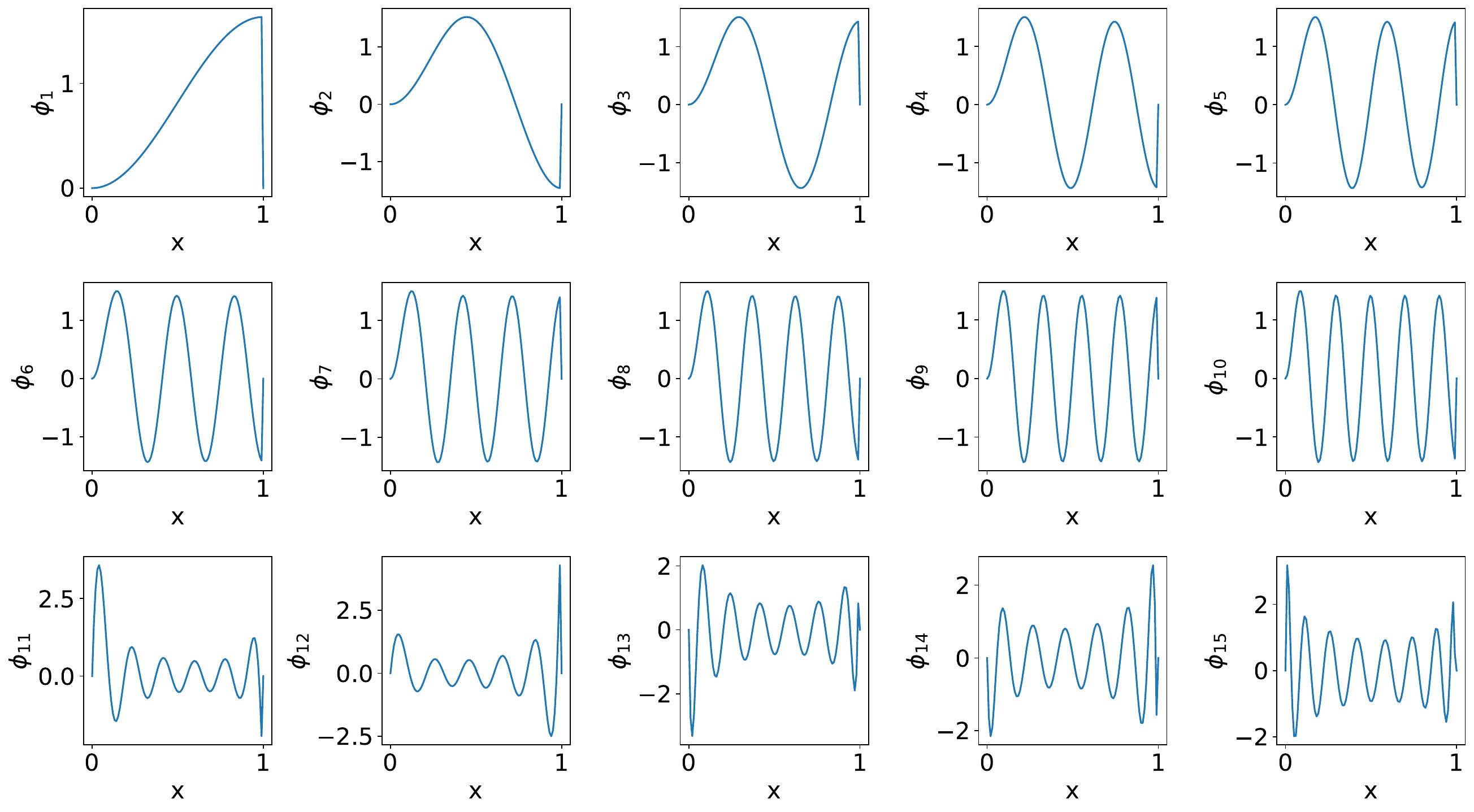}
    \caption{The trial basis comprised of fifteen orthonormalized functions used for the 1D advection-diffusion problem. Note the presence of boundary layers near the right boundary,}
    \label{fig:1D_AdvDiff_Phi}
\end{figure}

We construct a \pgnet where $\NN$ is as described in Section \ref{sec:net_arch} with 5 hidden layers of widths [10,20,30,40,30]. The trained network is then used on the three test datasets. The histogram (with rug plots) of the relative $L^2$ test errors are shown in Figure \ref{fig:1D_AdvDiff_distributions}, where we also compare with the relative finite dimensional projection error. Note that that the (mean) projection error on DATASET 3 is much larger as compared to DATASET 1 and 2. We observe that the performance of \pgnet is close to the projection, while struggling a bit on DATASET 3. This is not surprising as the current problem is much more challenging than the diffusion problem, especially with the existence of a boundary layer and the imposition of homogeneous Dirichlet boundary conditions. In Figures \ref{fig:1D_AdvDiff_samples_dataset1}, \ref{fig:1D_AdvDiff_samples_dataset2}, and \ref{fig:1D_AdvDiff_samples_dataset3}, we show the $f$ and corresponding \pgnet solutions for 4 samples in DATASETS 1, 2, and 3, respectively. The \pgnet solutions are indistinguishable from the reference solutions.

We plot the true $\bPsi$ and the \pgnet approximation $\bhPsi$ in Figure \ref{fig:1D_AdvDiff_Psi}. Recall that that these basis functions are not included in the training objective, i.e, $\bPsi$ are learned implicitly. We can observe that the low frequency modes are captured well by \pgnet, while the high frequency modes are qualitatively well approximated with the exception of two, which are less accurately resolved. Since most of the optimal weighting functions are correctly captured, the \pgnet performs well on the OOD datasets.

Next, we compare the performance of \pgnet with a suitable L-DeepONet, BNet, and FNO. Table \ref{tab:1D_AdvDiff_comparisons} compares the mean relative $L^2$ error with all the methods on each test dataset. We observe that \pgnet and FNO yield similar performance. L-DeepONet performs poorly on all 3 datasets. Similar to the 1D diffusion problem, the BNet performs as well as \pgnet on the first two datasets, but is the worst performer on the challenging DATASET 3.

When we train the operator networks on datasets of different sizes (characterized by the number of $f$ samples used), we once again observe (see Figure \ref{fig:1D_AdvDiff_trainingsize_variation}) that \pgnet error is fairly robust to the number of training samples. Surprisingly, we note that the FNO is also fairly robust on test DATASET 3, although the this is not true for the other two datasets.

\begin{figure}
    \centering
    \includegraphics[width=\linewidth]{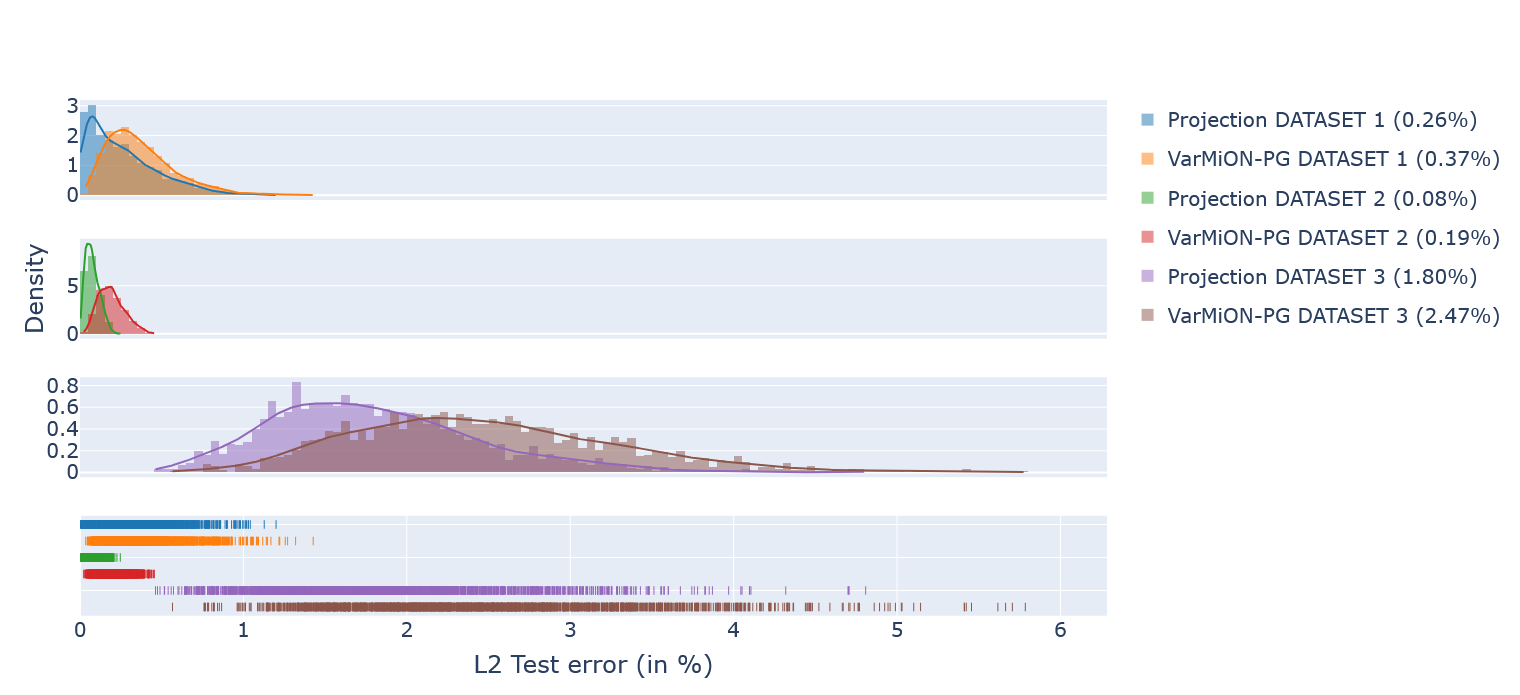}
    \caption{1D advection-diffusion problem: Histograms with rug plots showing the relative $L^2$ error (in \%) with the projected solution $\bar{u}$ and \pgnet solution $\hat{u}$. The average error for each dataset is shown in parentheses.}
    \label{fig:1D_AdvDiff_distributions}
\end{figure}

\begin{figure}
    \centering
    \includegraphics[width=\linewidth]{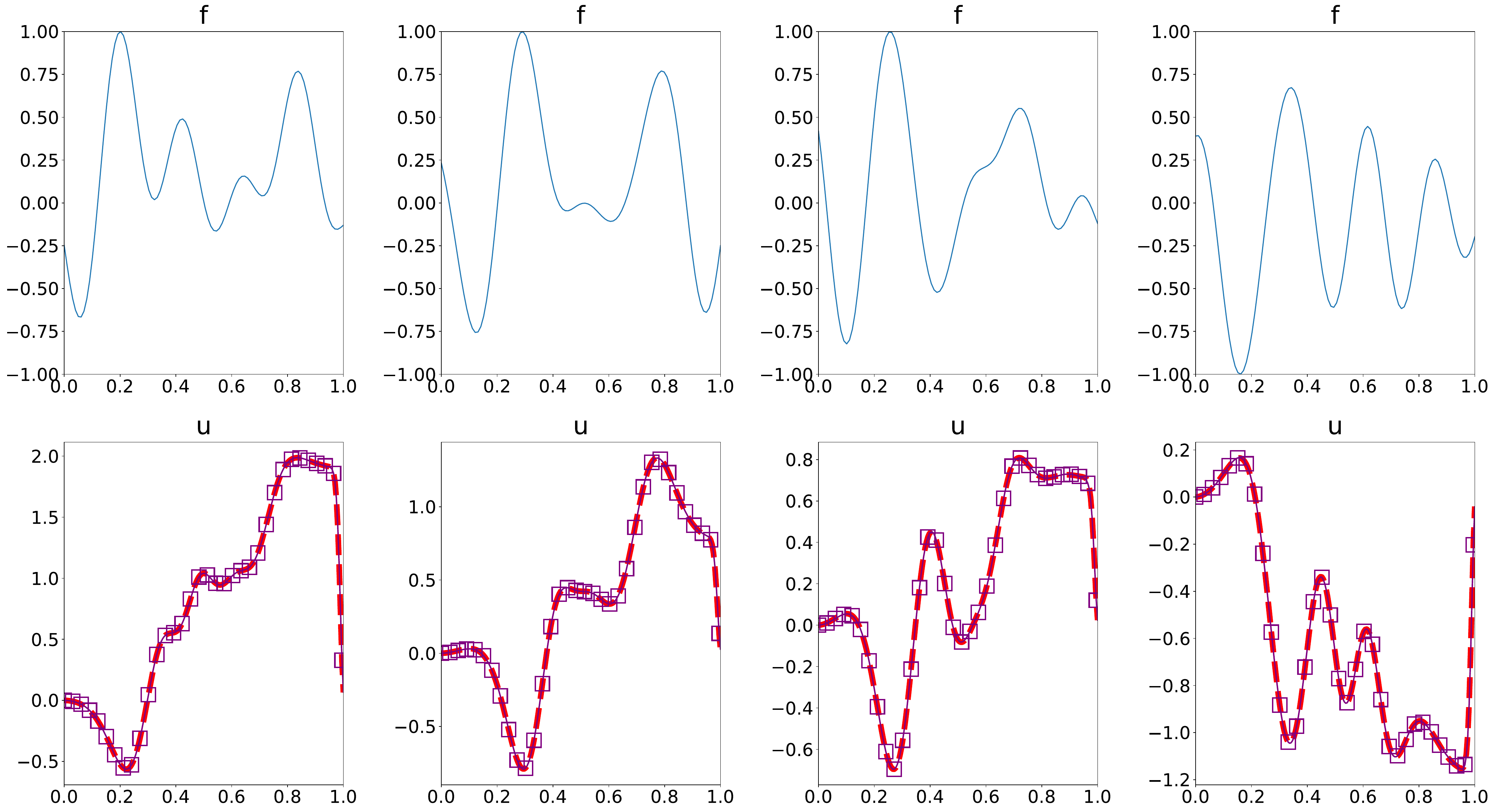}
    \caption{1D advection-diffusion problem: 4 samples from DATASET 1, with the forcing functions $f$ plotted the first row. The corresponding reference solutions (red dashed) and the \pgnet approximations (purple squares) are shown in the second row.}
    \label{fig:1D_AdvDiff_samples_dataset1}
\end{figure}

\begin{figure}
    \centering
    \includegraphics[width=\linewidth]{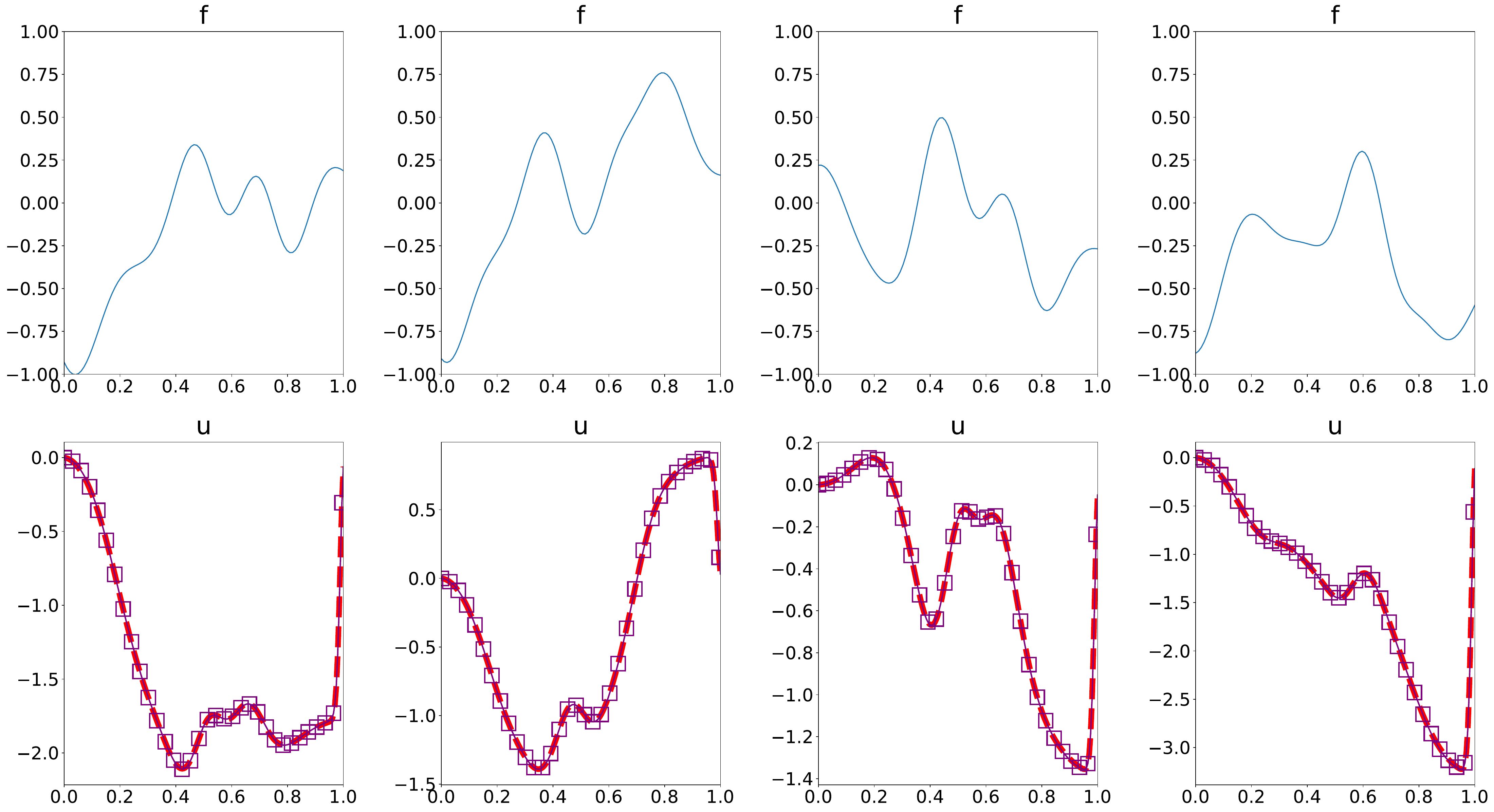}
    \caption{1D advection-diffusion problem: 4 samples from DATASET 2, with the forcing functions $f$ plotted the first row. The corresponding reference solutions (red dashed) and the \pgnet approximations (purple squares) are shown in the second row.}
    \label{fig:1D_AdvDiff_samples_dataset2}
\end{figure}

\begin{figure}
    \centering
    \includegraphics[width=\linewidth]{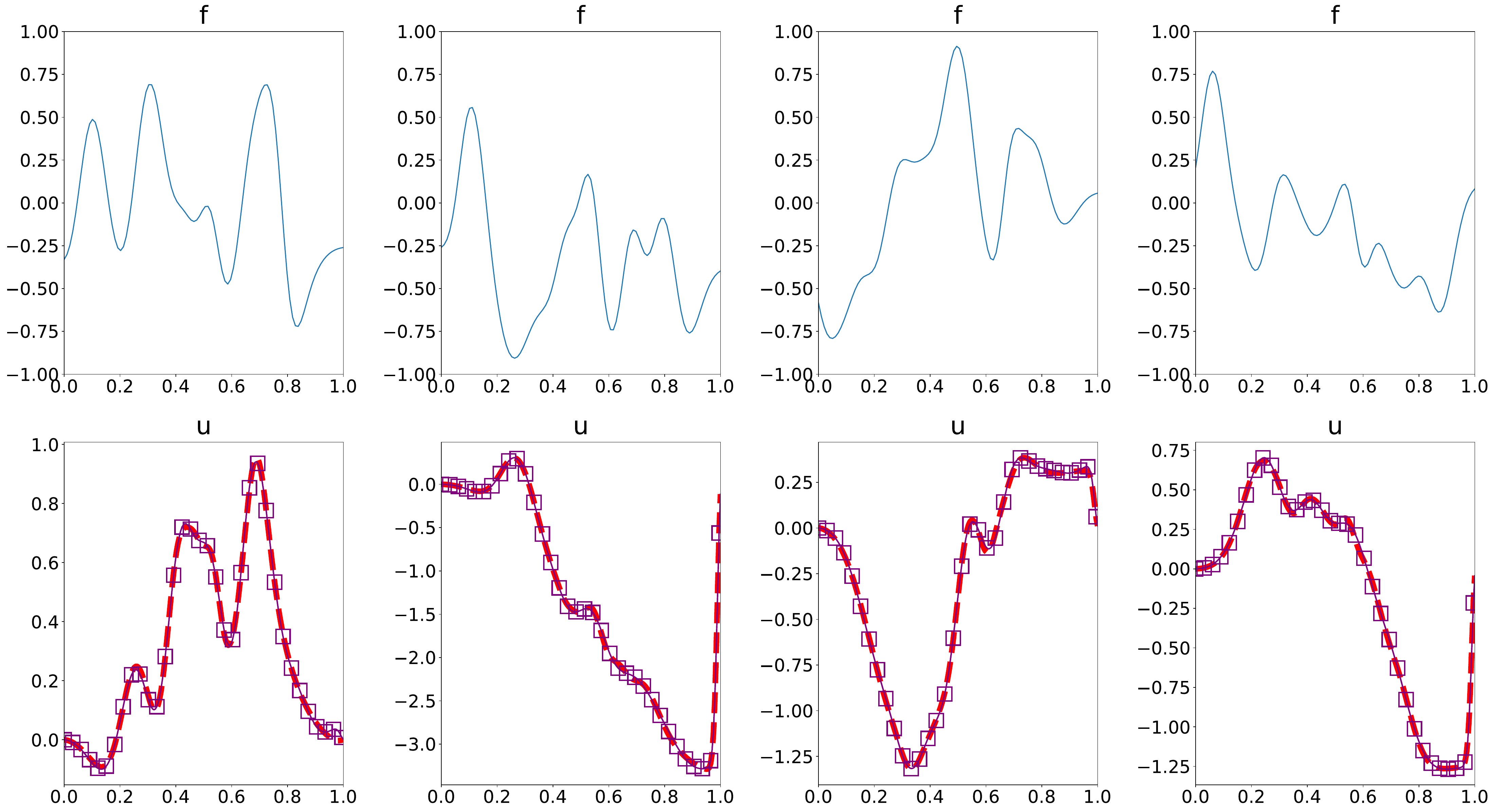}
    \caption{1D advection-diffusion problem: 4 samples from DATASET 3, with the forcing functions $f$ plotted the first row. The corresponding reference solutions (red dashed) and the \pgnet approximations (purple squares) are shown in the second row.}
    \label{fig:1D_AdvDiff_samples_dataset3}
\end{figure}

\begin{figure}
    \centering
    \includegraphics[width=\linewidth]{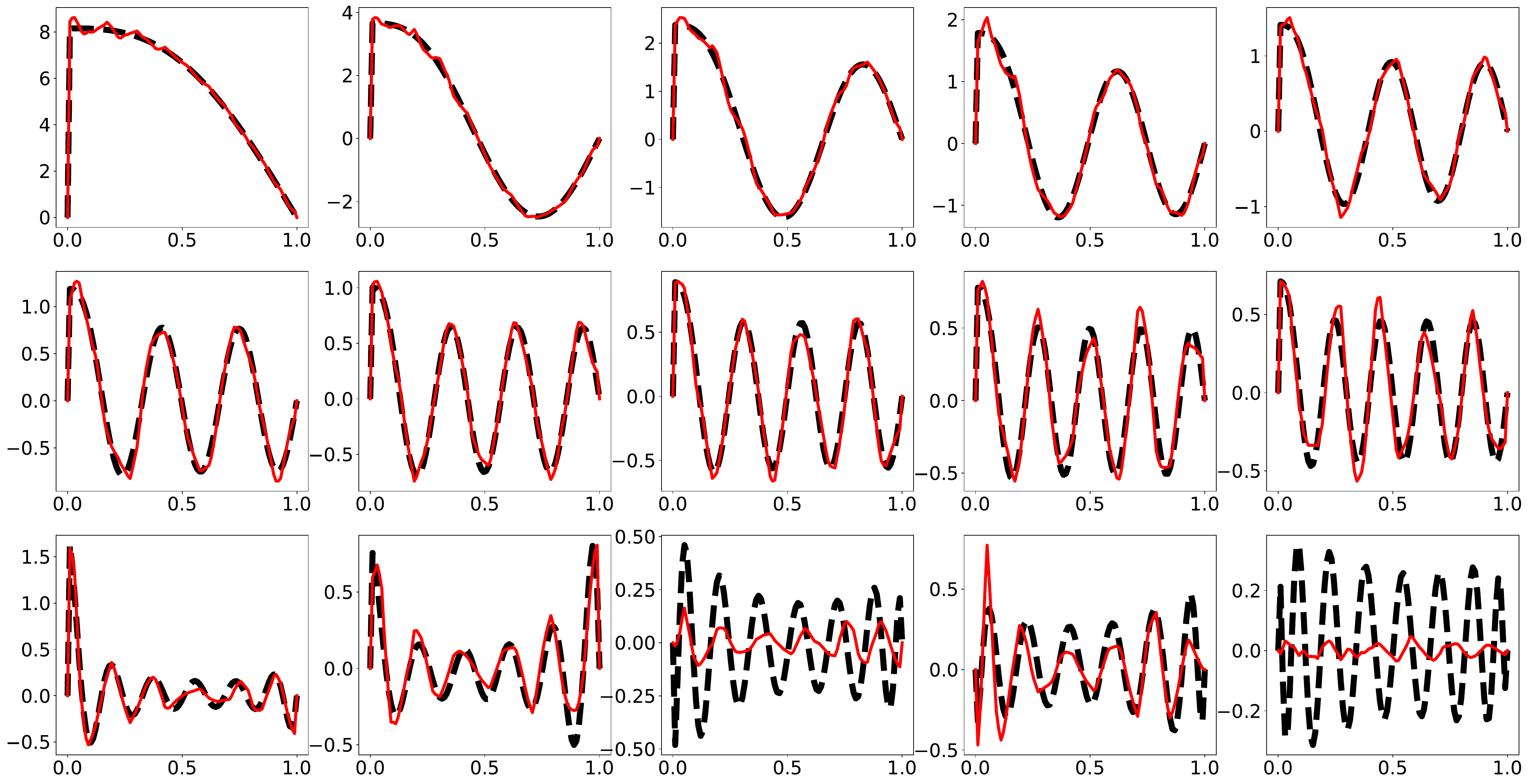}
    \caption{1D advection-diffusion problem: Comparison of the exact weighting functions $\bPsi$ (black) and the approximations $\bhPsi$ yielded by \pgnet (red).}
    \label{fig:1D_AdvDiff_Psi}
\end{figure}

\begin{table}
\centering
\begin{tabular}{ |c|c|c|c|c| }
\hline
Method & No. of Parameters & Dataset 1 & Dataset 2 & Dataset 3 \\
\hline
Projection & - & 0.26 & 0.08 & 1.80 \\
\pgnet & 3805 & 0.37 & 0.19 & 2.47 \\
FNO & 3953 & 0.30 & 0.35 & 2.27 \\
L-DeepONet & 4405 & 3.00 & 2.55 & 12.63 \\
BNet & 600 & 
0.32 & 0.42 & 34.27 \\
\hline
\end{tabular}
\caption{1D advection-diffusion problem: Comparison of mean relative $L^2$ test error (in \%) with the various methods.}
\label{tab:1D_AdvDiff_comparisons}
\end{table}

\begin{figure}
    \centering
    \includegraphics[width=\linewidth]{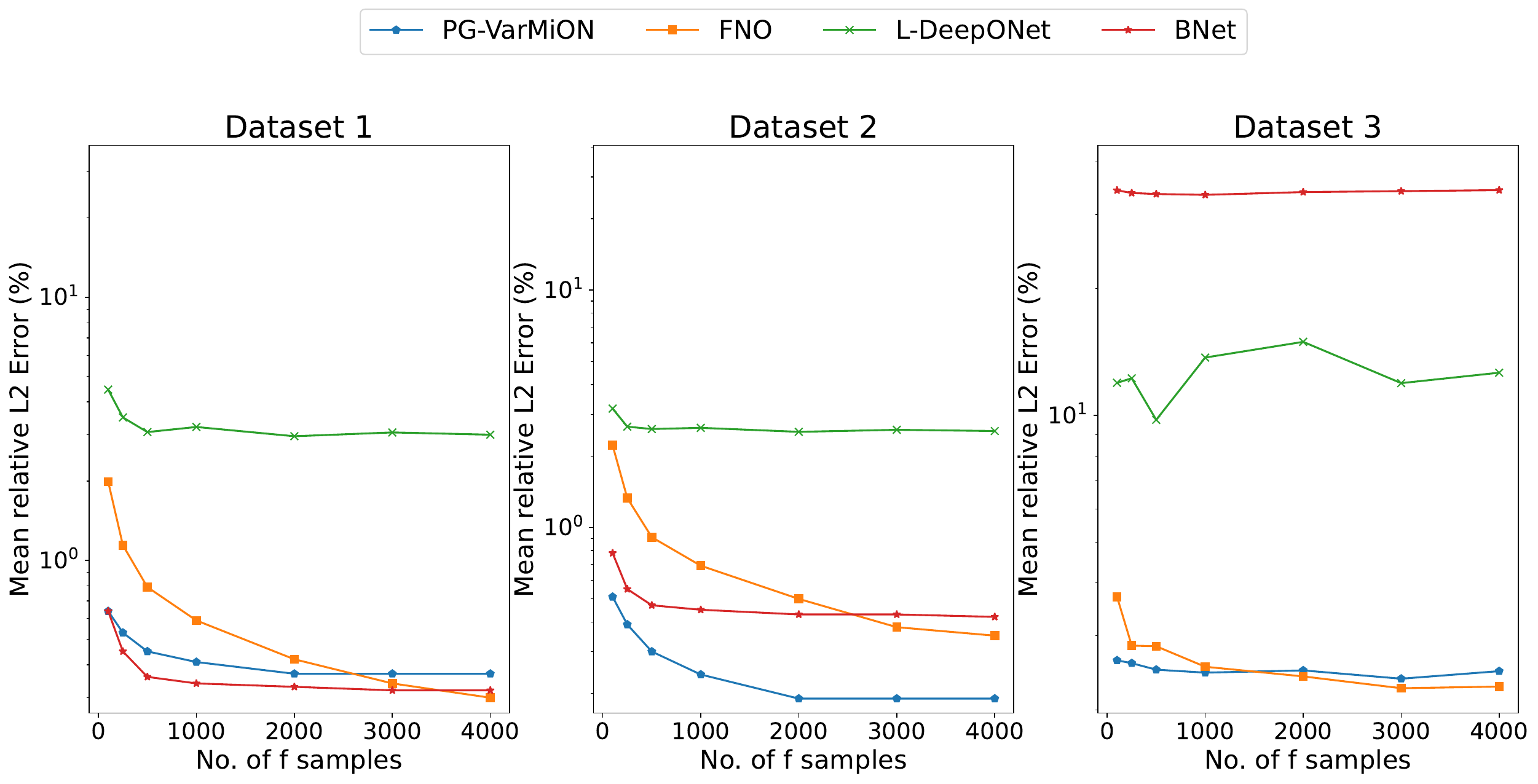}
    \caption{1D advection-diffusion problem: Comparison of mean relative $L^2$ test error (in \%) with the various methods.}
    \label{fig:1D_AdvDiff_trainingsize_variation}
\end{figure}

\begin{remark}
    While this section focuses on comparing the accuracy of different models, it is important to note that computational cost is a significant drawback for FNOs compared to other models. The main reason for this is that each layer of the FNO acts on data sampled on the quadrature points, which can form rather large arrays. While the Fast Fourier Transforms used in FNOs are computationally efficient, their repeated application to  large arrays results in higher memory requirements and makes them more computationally expensive than other models. A comparison of computational efficiency of FNOs, VarMiONs, and NGOs can be found in \cite{melchers2024neural}.
\end{remark}

\subsection{2D advection-diffusion problem}

Moving to a two-dimensional problem, we consider the operator defined by the solution to the advection-diffusion problem given by \ref{eqn:pde} where $\kappa=10^{-3}$ and $\bm{c}$ is a vortex centered on (0.75,0.75) expressed as
\begin{equation}
\begin{aligned}
c_1 &= -5(y-0.75)\exp{\left(\frac{1-(5(x-0.75))^2-(5(y-0.75))^2}{2}\right)} \\
c_2 &= \hphantom{-}5(x-0.75)\exp{\left(\frac{1-(5(x-0.75))^2-(5(y-0.75))^2}{2}\right)}
\end{aligned}
\end{equation}
Although $\bm{c}$ is spatially varying, we fix this advective field across all samples. The velocity field for this problem is shown in Figure \ref{fig:2D_sample}. The training and test datasets are constructed by selecting forcing functions $f$ as described in Section \ref{sec:data}. 

\begin{figure}
    \centering
    \includegraphics[width=\linewidth]{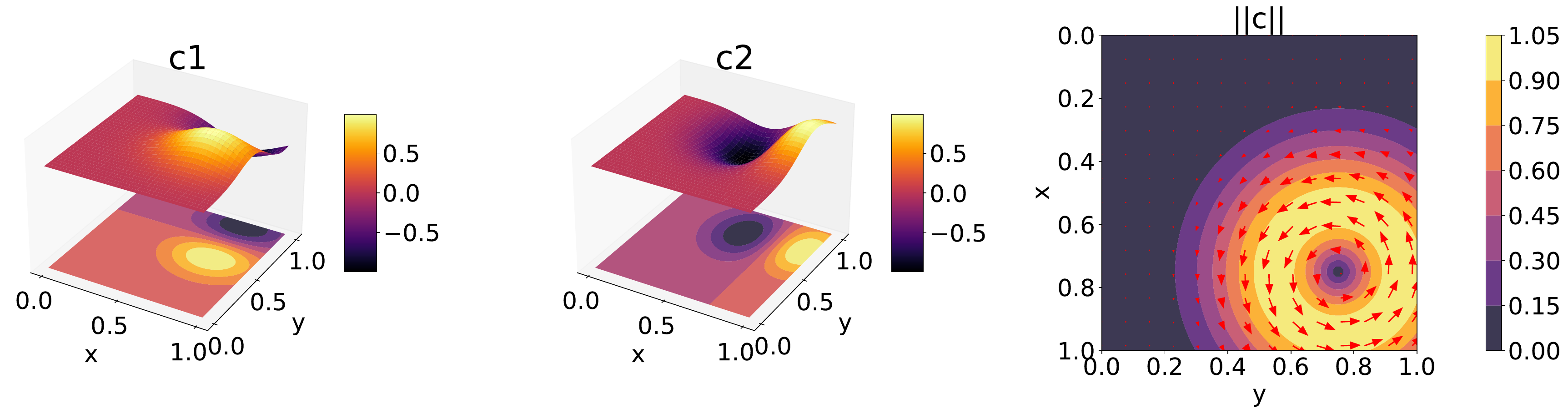}
    \caption{The velocity field  for the 2D advection-diffusion solution sample. $c_1$ and $c_2$ are the $x$ and $y$ components of the velocity field, respectively. $\|c\|$ is the magnitude of the velocity. The red arrows indicate the direction of the velocity.}
    \label{fig:2D_sample}
\end{figure}

Unlike the 1D advection-diffusion problem considered previously, it is non-trivial to construct handcrafted trial basis for this 2D problem. Thus we choose a 100-dimensional tensorized orthonormal sine basis given by,
\begin{equation}\label{eqn:2D_puresinebasis}
\Phi_{i,j}(\x) = 2\sin{(i\pi x)}\sin{(j\pi x)}, \quad 1\leq i,j \leq 10.
\end{equation}
While this leads to larger projection errors (see Figure \ref{fig:2D_AdvDiff_distributions}), which we recall is the lower bound for the \pgnet error, we demonstrate that the \pgnet error is not much larger. We expect that a better trial basis would reduce both the projection and \pgnet errors, which will be a topic of future investigation. 

For this experiment, we compare the performance of \pgnet, L-DeepONet and BNet. Figure \ref{fig:2D_AdvDiff_distributions} shows the resulting relative solution error on the test set. We observe that the \pgnet leads to the lowest errors, while BNet is the worst performer. We remark that the test samples for this experiment are in-distribution, where in the past 1D examples the BNet had performed as well as the \pgnet. This demonstrates that despite sharing the same trial basis, the variational structure incorporate into the \pgnet leads to a more robust predicition of the solution, while using using a significantly smaller number of training parameters (see Table \ref{tab:2D_network_summary}). 

Next, we take a closer look at 3 test samples, whose forcing functions are shown in Figure \ref{fig:2D_AdvDiff_forcing}, with the corresponding solutions compared in  \ref{fig:2D_AdvDiff_contours}. From these contour plots, the reference, projection and \pgnet solutions look very similar, while the L-DeepONet results look marginally diffused. However, the BNet results seems to be visually contain more high-frequency modes compared to the reference, which explains the large test errors in Figure \ref{fig:2D_AdvDiff_distributions}. To accentuate the similarities (and the differences) between the various methods, we also plot the pointwise errors in Figure \ref{fig:2D_AdvDiff_errors}, and the solutions extracted along 1D slices in Figure \ref{fig:2D_AdvDiff_slices}.

Finally, we depict expected weighting functions (computed by solving the adjoint problem using nutils) corresponding to the sixteen lowest modes, and the \pgnet approximations in \ref{eqn:2D_puresinebasis}. We observe that the \pgnet is able to qualitatively learn the optimal weighting functions, despite not being shown the true $\bPsi$ while training. We also remark that the quality of predicted weighting functions for the higher modes deteriorates (not shown here) as $i,j$ increases, similar to the 1D advection-diffusion problem. We expect quality to improve if a finer grid of sensor points is chosen. 

\begin{figure}
    \centering
    \includegraphics[width=\linewidth]{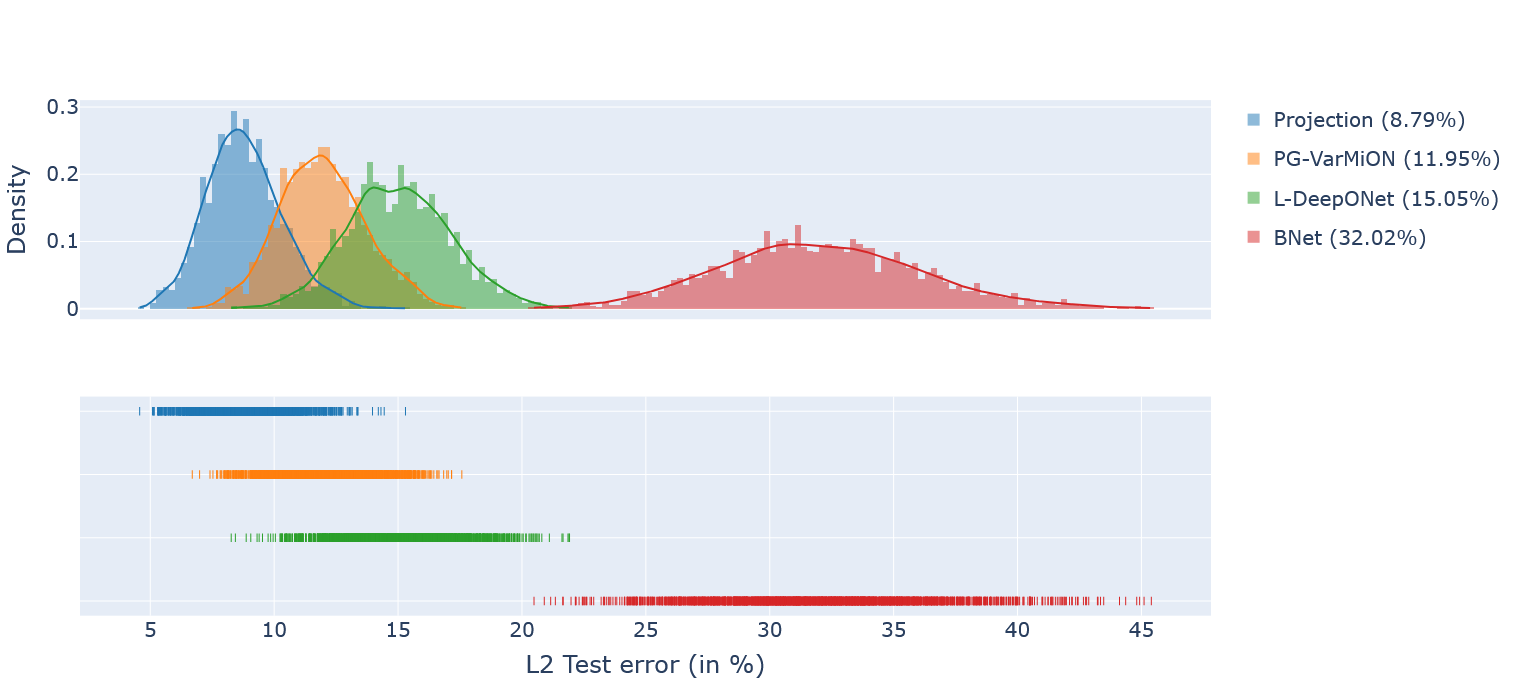}
    \caption{Histograms with rug plots showing the relative $L^2$ error for the projection, \pgnet, L-DeepONet, and BNet. The average error for each dataset is shown in parentheses.}
    \label{fig:2D_AdvDiff_distributions}
\end{figure}

\begin{figure}
     \centering
     \includegraphics[width=\linewidth]{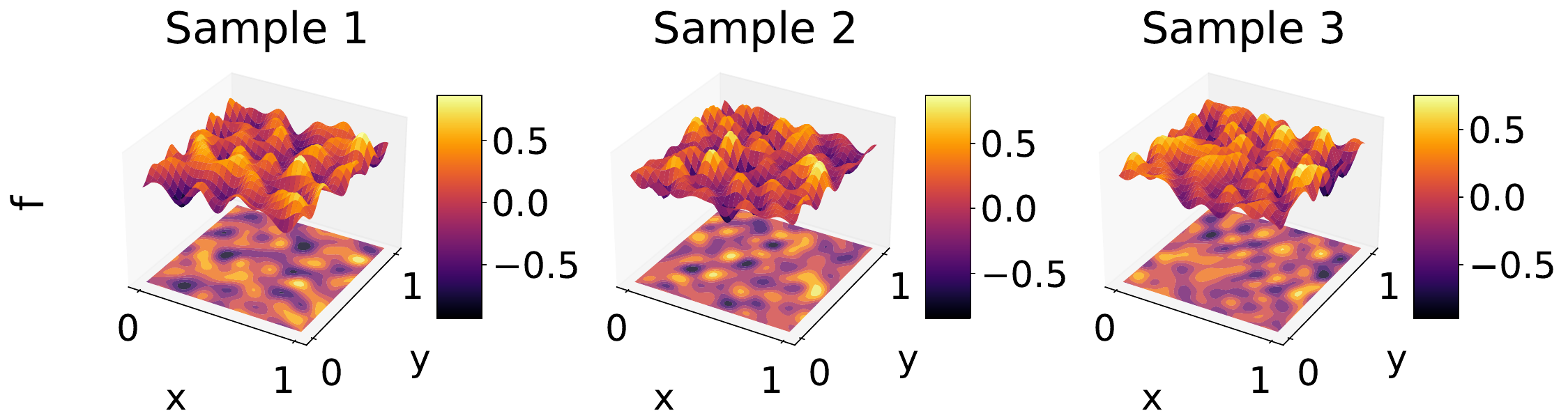}
     \caption{3D plots with contours of the forcing functions for 3 test samples.}
     \label{fig:2D_AdvDiff_forcing}
\end{figure}

\begin{figure}
     \centering
     \includegraphics[width=\linewidth]{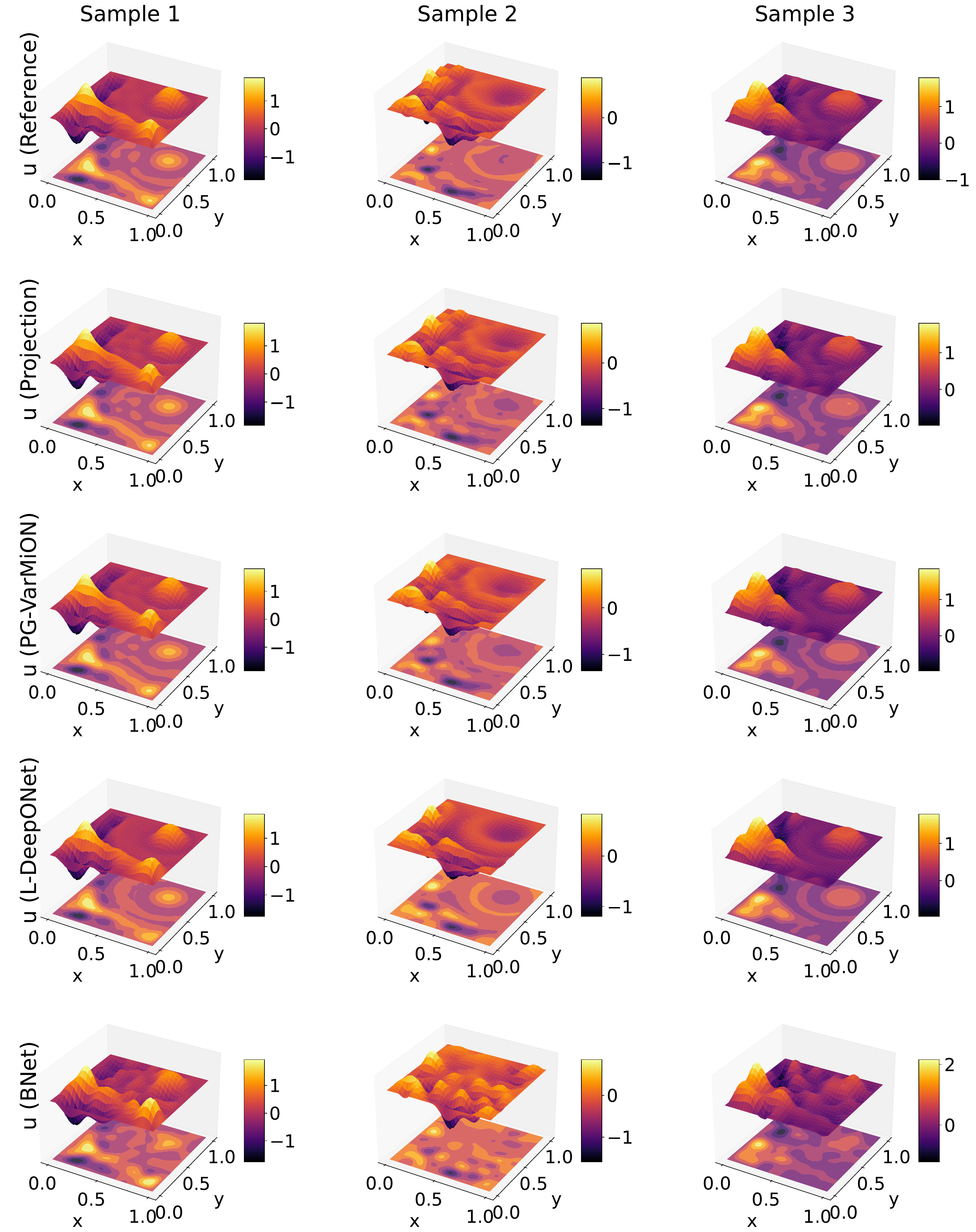}
     \caption{3D plots with contours of the forcing functions and the corresponding reference solutions, projection, \pgnet, L-DeepONet and BNet approximations for 3 test samples.}
     \label{fig:2D_AdvDiff_contours}
\end{figure}

\begin{figure}
    \centering
    \includegraphics[width=0.75\linewidth]{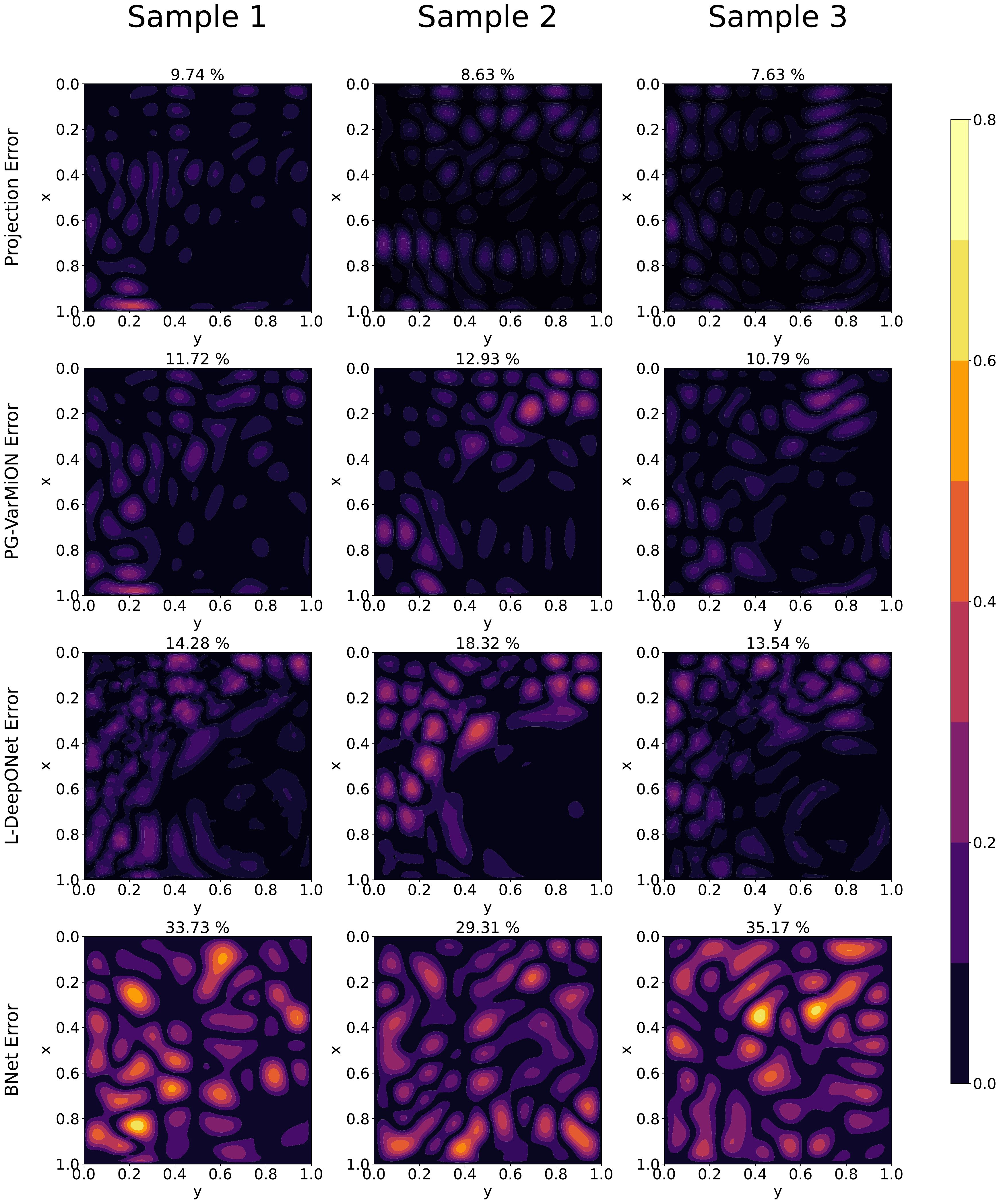}
    \caption{Plots of the errors of the 3 test samples for the projection, \pgnet, L-DeepONet and BNet approximations.}
    \label{fig:2D_AdvDiff_errors}
\end{figure}

\begin{figure}
     \centering
     \includegraphics[width=0.8\linewidth]{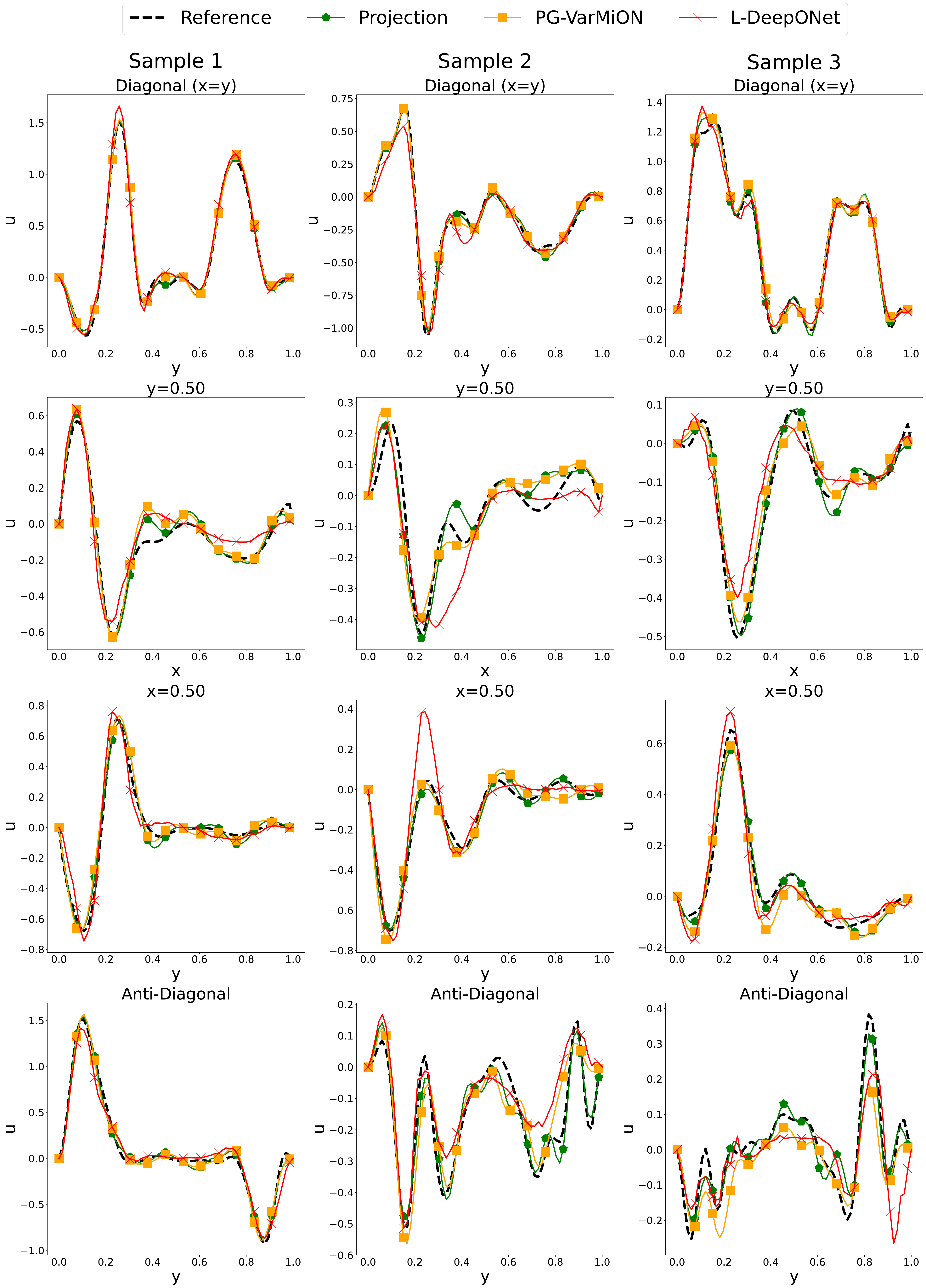}
     \caption{1D slices of the 3 test samples for the reference solutions, projection, \pgnet, L-DeepONet and BNet approximations. Slices are for the diagonal $x=y$ and anti-diagonal $x=1-y$ and the lines $y=0.5$, $x=0.5$.}
     \label{fig:2D_AdvDiff_slices}
\end{figure}

\begin{figure}
    \centering
    \includegraphics[width=0.9\linewidth]{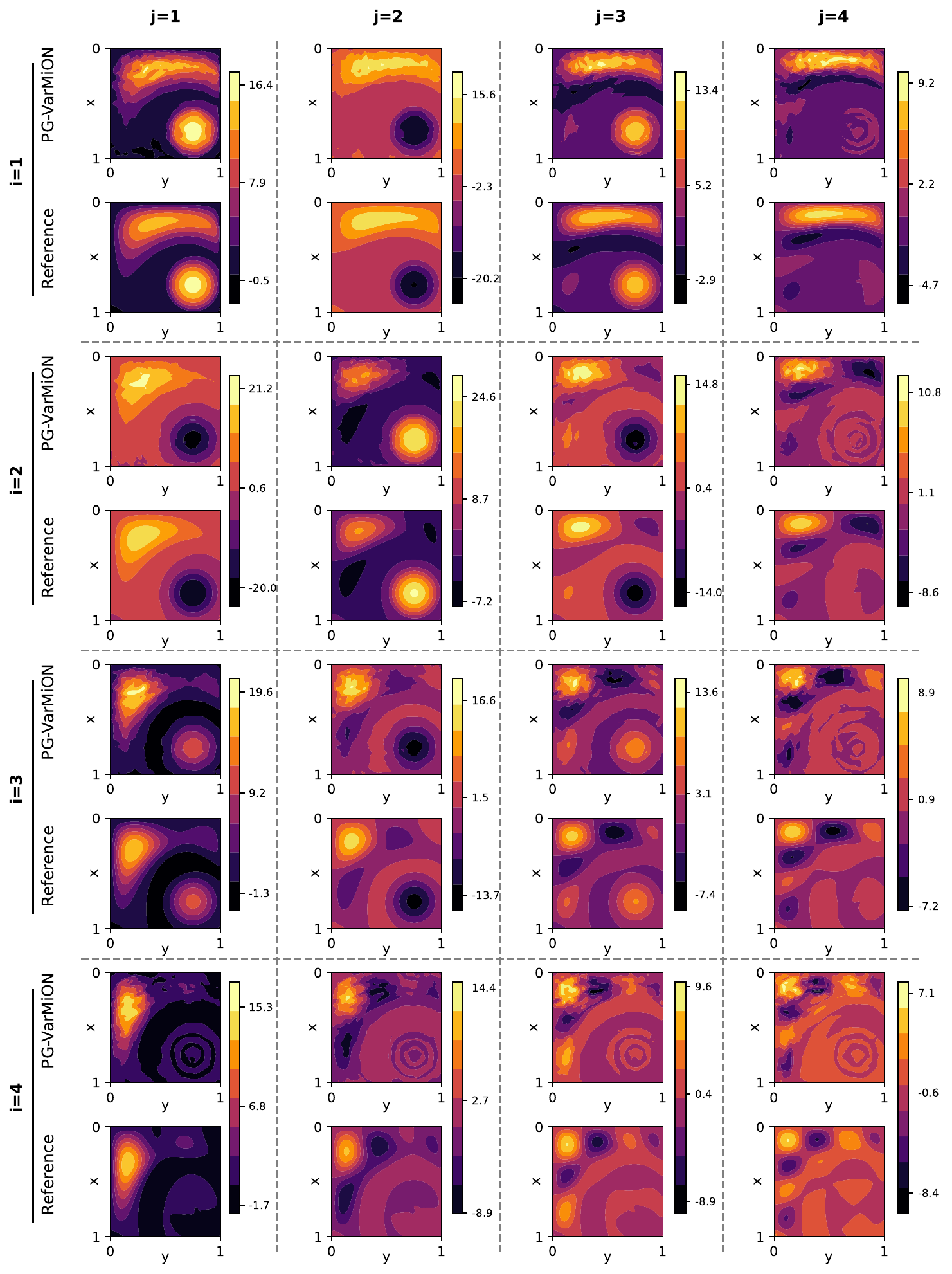}
    \caption{Expected and \pgnet approximations for some of the optimal weighting functions, for the lowest modes corresponding to $1\leq i,j \leq 4$.}
    \label{fig:2D_AdvDiff_Psi}
\end{figure}


\section{Conclusion}\label{sec:conclusion}

In this work we have proposed a novel framework to design operator networks for linear elliptic PDEs, by emulating the optimal Petrov-Galerkin variational form. The solution of this variational form is the projection of the infinite-dimensional weak solution of the PDE onto a given finite-dimensional function space, consequently the best approximation in the norm inducing the projector. This optimal solution can be recovered if we are able to explicitly construct the set of optimal weighting functions. However, with the exception of simple problems, the constructions of these weighting functions are unavailable. 

The proposed \pgnet emulates the symmetrized Petrov-Galerkin formulation of the PDE, and recovers the optimal solution given a source function $f$ and boundary flux $\eta$. Further, by training on a dataset of source function and solution pairs, the \pgnet is also able to learn the structure of the optimal basis functions in an unsupervised manner. Thus, the \pgnet is capable of generalizing to out-of-distribution data far beyond existing operator learning frameworks.

We also derive explicit estimates for the generalization error, which is bounded from below by the projection error, and from above by the sum of the projection error, the quadrature error (due to finite-dimensional network input), and the error in approximating the weighting functions.

Taking the advection-diffusion equation as a canonical but non-trivial example, we present detailed numerical results to demonstrate the efficacy of our approach. In particular, we show that:
\begin{itemize}
    \item Given a good trial basis $\bPhi(\x)$, the \pgnet is able to approximate the optimal (projected) solution accurately.
    \item Since the \pgnet learns the structure of the weighting functions, it is capable of generalizing to out-of-distribution samples, in contrast to other popular operator network frameworks.
    \item By embedding the Petrov-Galerkin structure in the network, we are able to significantly reduce the data-complexity when it comes to training the operator network. Furthermore, the specialized structure allows us to control the overall size of the network by allocating all the trainable weights to learn the weighting functions. The size advantage is particularly clear when considering 2D problems.
\end{itemize}There are several extensions possible based on the \pgnet framework. Firstly, we have only considered the scenario where the source function $f$ (and $\eta$) varies, keeping all other parameters (such as the diffusivity $\kappa$ and flow velocities $\bm{c}$) fixed. Thus, we only need to learn a single set of weighting functions $\bPsi$ for a given problem. However, if these additional parameters are also varied the $\bPsi$ will need to incorporate these parameters accordingly. Thus, we would need to design a \pgnet that accounts for the variation in $\bPsi$ as a function of $\kappa$ and $\bm{c}$.

Secondly, the quality of the solutions depends on a good choice of the trial basis $\bPhi$. A sine basis is an excellent choice for the pure diffusion problem. However, it is not trivial to craft a suitable $\bPsi$ for a more general PDE. Thus, one could consider the construction of a good $\bPsi$ as an additional learning task prior to training \pgnet. Finally, we would like to design \pgnet-type operator networks for non-linear PDEs. As one possibility, we envision using an iterative approach incorporating the linearized problem as in a Newton-Raphson method, with the linear problem inheriting the \pgnet structure. These, and related extensions, will be considered in future work.  

\textbf{A final conclusion:}  To obtain efficient and accurate network architectures for PDEs, we believe it is both prudent and propitious to model networks on the variational methods that are the gold standard approaches for obtaining solutions of PDEs.  This is the philosophy of \pgnet, and this paper is one step in that direction.  Ultimately, our goal is to be able to converge the \pgnet solution to the best approximation in a desired norm, and obtain the accuracy and efficiency needed for applications in engineering design, manufacturing, optimization, and inverse problems.

\section*{Acknowledgments}
MGL was supported in part by the Swedish Research Council Grant, No. 2021-04925, and the Swedish Research Programme Essence. YY was supported by the AFOSR grant FA9550-22-1-0197 and the National Institute of Health award 1R01GM157589-01. Portions of this research were conducted on Lehigh University's Research Computing infrastructure partially supported by NSF Award 2019035. DR and PC acknowledge the University of Maryland supercomputing resources (\url{http://hpcc.umd.edu}) made available for conducting portions of the research reported in this work.

\bibliographystyle{plain}
\bibliography{refs}

\end{document}